\newtheorem{theorem}{Theorem}[section]
\newtheorem{lemma}[theorem]{Lemma}
\newtheorem{construction}{Construction}
\newtheorem{proposition}[theorem]{Proposition}
\newtheorem{question}{Open Problem}
\theoremstyle{definition}
\def \leq {\leqslant}
\def \geq {\geqslant}
\def \Z {\mathbb{Z}}
\def \L {\mathcal{L}}
\def \circ {{\rm Circ}}
\def \a {{\alpha}}
\def \b {{\beta}}
\def \g {{\gamma}}
\def \mod#1{{\:({\rm mod}\ #1)}}
\let\oldproofname=\proofname
\renewcommand{\proofname}{\rm\bf{\oldproofname}}
\setlist{topsep=-0.7\baselineskip,itemsep=1mm,parsep=0mm,partopsep=0mm,after=\vspace{0.7\baselineskip}}
\title{Decomposable twofold triple systems with non-Hamiltonian 2-block intersection graphs}
\author{Rosalind A. Cameron and David A. Pike \\ Department of Mathematics and Statistics\\ Memorial University of Newfoundland\\ St John's, NL, Canada A1C 5S7\\
\texttt{rahoyte@outlook.com}, \texttt{dapike@mun.ca}}
\date{}
\begin{document}
\sloppy
\def\baselinestretch{1.1}\small\normalsize
\maketitle

\begin{abstract}
The \emph{$2$-block intersection graph} ($2$-BIG) of a twofold triple system (TTS)  is the graph whose vertex set is composed of the blocks of the TTS and two vertices are joined by an edge if the corresponding blocks intersect in exactly two elements. 
The $2$-BIGs are themselves interesting graphs: each component is cubic and $3$-connected, and a $2$-BIG is bipartite exactly when the TTS is decomposable to two Steiner triple systems. Any connected bipartite $2$-BIG with no Hamilton cycle is a counter-example to a conjecture posed by Tutte in 1971.
Our main result is that there exists an integer $N$ such that for all $v\geq N$, if  $v\equiv 1$ or $3\mod{6}$ then there exists a TTS($v$) whose $2$-BIG is bipartite and connected but not Hamiltonian. Furthermore, $13<N\leq 663$. 
Our approach is to construct a TTS($u$) whose $2$-BIG is connected bipartite and non-Hamiltonian and embed it within a TTS($v$)  where $v>2u$ in such a way that, after a single trade, the $2$-BIG of the resulting TTS($v$) is bipartite connected and non-Hamiltonian.
\end{abstract}

\section{Introduction}

A combinatorial design $(V,\mathcal{B})$ consists of a set $V$ of elements (called points), together with a set $\mathcal{B}$ of subsets (called blocks) of $V$. A balanced incomplete block design, ($v, k, \lambda$)-BIBD, is a combinatorial design in which $|V| = v$, for each block $B\in\mathcal{B}$, $|B| = k$, and each $2$-subset of $V$ occurs in precisely $\lambda$ blocks of $\mathcal{B}$. A ($v, 3, 1$)-BIBD is a Steiner triple system of order $v$ (STS($v$)) and a ($v, 3, 2$)-BIBD is a twofold triple system of order $v$ (TTS($v$)).

The block intersection graph  of a design $(V,\mathcal{B})$ is the graph whose vertices are the blocks of $\mathcal{B}$ and two blocks $B_1,B_2\in\mathcal{B}$ are adjacent if $|B_1\cap B_2|>0$. For a nonnegative integer $i$, the $i$-block intersection graph ($i$-BIG) of a design  $(V,\mathcal{B})$ is the graph whose vertices are the blocks of $\mathcal{B}$ and two blocks $B_1,B_2\in\mathcal{B}$ are adjacent if $|B_1\cap B_2|=i$. Note that the $0$-BIG is the complement of the block intersection graph. 
 The block-intersection graph of an STS($v$) is equivalent to the $1$-BIG, and these have been well-studied (for example see \cite{AbueidaPike13,DewarStevens,HorakRosa88}). A Hamilton cycle in the $1$-BIG of an STS induces a minimal change ordering on the blocks of the design, as does a Hamilton cycle in the $2$-BIG of a TTS. Some results on higher $\lambda$ can be found in the work of Asplund and Keranen \cite{AspKer18}. 
In this paper we focus on Hamilton cycles in the $2$-BIG of twofold triple systems.

A Hamilton cycle in the $2$-BIG of a TTS is equivalent to a cyclic Gray code \cite{DewarStevens}, which leads to applications in coding theory. 
It is known that for $v\geq 4$ such that $v\equiv 0,\, 1\mod{3}$ and $v\neq 6$, there exists a TTS($v$) whose $2$-BIG is Hamiltonian \cite{DewarStevens,ErzuPike18}. There also exists a TTS($v$) whose $2$-BIG is connected but non-Hamiltonian when $v=6$ or $v\geq 12$ and $v\equiv 0,1\mod{3}$ \cite{ErzuPike17}. While the existence of TTSs in each case has been established, it is not yet known what properties of a TTS are sufficient for the existence of a Hamilton cycle in the $2$-BIG. We note that, in general, the problem of determining whether cubic graphs are Hamiltonian is NP-complete \cite{GareyNPC76}.

We say a positive integer $v$ is \emph{admissible} if $v\equiv 1$ or $3 \mod{6}$. It is straightforward to show that the $2$-BIG of a TTS($v$) is bipartite if and only if the TTS($v$) can be decomposed into two STS($v$) (see Lemma~\ref{Lemma:BipartiteTTS}).  Our main result is as follows.

\begin{theorem}\label{Theorem:ExistenceLargev}
There exists an integer $N$ such that for all admissible $v\geq N$, there is a TTS($v$) whose $2$-BIG is bipartite connected and non-Hamiltonian. Furthermore, $13<N\leq 663$. 
\end{theorem}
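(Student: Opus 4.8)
The plan is to carry out the strategy sketched in the abstract. First I would construct a single decomposable twofold triple system $D_1\sqcup D_2$ of some admissible order $u$, where $D_1$ and $D_2$ are STS($u$)s sharing no block, whose $2$-BIG $G_u$ is connected and non-Hamiltonian; bipartiteness is then automatic by Lemma~\ref{Lemma:BipartiteTTS}. A short counting argument shows every $3$-connected cubic graph is $1$-tough, so the non-Hamiltonicity of $G_u$ cannot be witnessed by a toughness deficiency; instead I would build $G_u$ to contain a \emph{brick}: an induced subgraph $B$ joined to the rest of $G_u$ by exactly three edges and admitting no Hamilton path between any two of its three attachment vertices. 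A graph containing such a $B$ has no Hamilton cycle, since a Hamilton cycle crosses the three-edge cut around $B$ an even number of times, cannot cross it zero times, and so would restrict to a Hamilton path of $B$ between two attachment vertices. The complementary bound $13<N$ records the computational fact that no TTS($v$) with all three required properties exists for admissible $v\leq 13$.

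\textbf{Stage 2: the embedding.} For an admissible $v$, extend $D_1$ to an STS($v$) $E_1$ and $D_2$ to an STS($v$) $E_2$; by the Doyen--Wilson theorem this is possible whenever $v\geq 2u+1$, and it can be arranged with $E_1$ and $E_2$ sharing no block, so that $E_1\sqcup E_2$ is a decomposable TTS($v$) containing the seed. The key observation is that every $2$-subset of the original $u$ points is already covered twice inside $D_1\cup D_2$, so no block of $E_1\sqcup E_2$ lying outside the seed can meet a seed block in two points; hence $G_u$ sits inside the $2$-BIG of $E_1\sqcup E_2$ as a union of whole components, with no edge leaving it. This $2$-BIG is therefore disconnected --- hence bipartite and non-Hamiltonian, but not connected --- so the job of the single trade will be to restore connectivity without destroying the other two properties. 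I would moreover choose the extensions $E_1,E_2$, drawing on the known constructions that control connectivity and Hamiltonicity of $2$-BIGs \cite{DewarStevens,ErzuPike17,ErzuPike18}, so that the $2$-BIG of $E_1\sqcup E_2$ has exactly two components: $G_u$ and a connected graph $G'$ spanned by the blocks meeting the $v-u$ new points.

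\textbf{Stage 3: the trade.} A single Pasch trade performed entirely within $E_1$ keeps $E_1$ an STS($v$), hence keeps $E_1\sqcup E_2$ decomposable and its $2$-BIG bipartite. A short analysis of which Pasch trades inside such an $E_1$ can involve a seed block shows the only possibility has the shape
\[
\{\,\{a,b,c\},\{a,x,y\},\{b,x,w\},\{c,y,w\}\,\}\ \longleftrightarrow\ \{\,\{a,b,x\},\{a,c,y\},\{b,c,w\},\{x,y,w\}\,\}
\]
where $\{a,b,c\}\in D_1$ and $x,y,w$ are new points, so I would arrange $E_1$ to contain the left-hand configuration for suitable $a,b,c,x,y,w$. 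After the trade the block $\{a,b,c\}$ is absent, so the pairs $\{a,b\},\{a,c\},\{b,c\}$ now contribute $2$-BIG edges joining $\{a,b,x\},\{a,c,y\},\{b,c,w\}$ to the seed region; these three edges splice the (otherwise barely altered) seed region onto $G'$, yielding a connected $2$-BIG. Choosing $\{a,b,c\}$ outside the brick $B$ --- of which there is ample choice --- leaves $B$ an induced subgraph of the new $2$-BIG, attached by exactly three edges and with its Hamilton-path obstruction intact, so the new $2$-BIG still contains a brick and is non-Hamiltonian. Thus at every admissible $v$ for which Stages~2 and~3 go through, the resulting TTS($v$) has a bipartite, connected, non-Hamiltonian $2$-BIG.

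\textbf{Obstacle and bound.} The real difficulty is the interplay of Stages~2 and~3: one must choose the Doyen--Wilson extensions so that the non-seed part of the $2$-BIG is a single connected piece \emph{and} so that a Pasch trade of the displayed shape is available with $\{a,b,c\}$ outside the brick, and then verify that this lone trade simultaneously restores connectivity, preserves decomposability, and leaves the brick undisturbed. Ensuring all of this forces $v$ to be large enough relative to $u$ and to the extension scheme, which is what produces the explicit bound $N\leq 663$; this bound is not claimed to be optimal, and determining the true threshold in the interval $(13,663]$ would require either a sharper construction for intermediate orders or a proof that none exists.
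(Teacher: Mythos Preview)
Your three-stage strategy---seed, embed, trade---is exactly the paper's, and your Pasch trade is (after swapping the roles of old and new points) identical to the one in the paper's proof of Theorem~\ref{Theorem:ExtendingnonHamiltonianTTS}. Where you sketch, the paper does the heavy lifting: the seed is an explicit TTS(331) built by first assembling a 136-block obstructing configuration $\mathbb{F}$ via repeated splicing of smaller labelled configurations and then embedding it using Lindner's pairwise-embedding theorem, and the Doyen--Wilson step is a bespoke difference-triple construction (Construction~\ref{Construction:PartialTTS}) engineered specifically so that the non-seed blocks form a single connected component of the 2-BIG. The bound $N\leq 663$ then falls out of $u=331$ together with $v\geq 2u+1$; your sketch does not derive the number.

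One place where the paper's argument is cleaner than yours: you preserve non-Hamiltonicity after the trade by tracking the brick $B$ through it, but for $B$ to remain attached by exactly three edges you need not only the removed seed block but also its three neighbours in $G_u$ to lie outside $B$, since each of those neighbours becomes adjacent to one of the traded-in blocks and would otherwise give $B$ an extra exit edge. The paper sidesteps this bookkeeping entirely. It simply observes that after the trade the whole of $G_u$ minus the one removed vertex is attached to the rest of the new 2-BIG by exactly three edges, so a Hamilton cycle in the new graph would restrict to a Hamilton path in $G_u\setminus\{v\}$ between two neighbours of $v$, and closing that path through $v$ would yield a Hamilton cycle of $G_u$ itself. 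In other words, the entire seed-minus-a-vertex already serves as your brick, and no smaller obstruction inside $G_u$ needs to be carried through the trade.
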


The $2$-BIGs that arise out of Theorem~\ref{Theorem:ExistenceLargev} are all $3$-connected bipartite cubic graphs \cite{ColbournJohnstone}. 
In 1971, Tutte \cite{Tutte71} conjectured that every $3$-connected bipartite cubic graph is Hamiltonian. This was disproved by Horton in the 1970s (see \cite{BondyMurty}). Horton's counter-example and other subsequent counter-examples \cite{EllinghamHorton1983,Georges1989, Horton82} are not $2$-BIGs since they cannot be labelled with the blocks of a TTS. Another conjecture in this strain is Barnette's conjecture which poses the still-open question that every planar $3$-connected bipartite cubic graph is Hamiltonian (see \cite{Grunbaum70}). 
 While the $2$-BIGs we construct are not planar graphs, we note that the labelling induces a cycle double-cover of the graph. 
We also observe that previously studied non-Hamiltonian $2$-BIGs are not bipartite \cite{ErzuPike17} and, conversely, for small examples of TTS($v$) whose $2$-BIG is bipartite and connected the $2$-BIG is also Hamiltonian. The following lemma was obtained by exhaustive computer search.

\begin{lemma}\label{Lemma:smallv} For admissible $v\leq 13$, if the $2$-BIG corresponding to a TTS($v$) is bipartite and connected then it contains a Hamilton cycle. 
\end{lemma}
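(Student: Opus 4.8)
The statement is a finite verification, so the plan is computational, but some reductions come first. For $v=1$ there are no $2$-subsets to cover, so the design is empty and the claim is vacuous; for $v=3$ the only TTS($3$) is the multiset $\{\{1,2,3\},\{1,2,3\}\}$, whose two blocks meet in three points, so its $2$-BIG has no edges and is disconnected, and again the hypothesis is never met. The only admissible values left are $v\in\{7,9,13\}$. By Lemma~\ref{Lemma:BipartiteTTS}, a TTS($v$) has bipartite $2$-BIG if and only if it decomposes into two STS($v$), so it suffices to examine, for each such $v$, every TTS($v$) of the form $\mathcal{S}_1\cup\mathcal{S}_2$ (multiset union) with $\mathcal{S}_1,\mathcal{S}_2$ Steiner triple systems on the same point set. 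Since Hamiltonicity is an isomorphism invariant, I would fix $\mathcal{S}_1$ to run over a transversal of the isomorphism classes of STS($v$) (one class for $v=7,9$; two for $v=13$), let $\mathcal{S}_2$ run over all labelled STS($v$) on that point set, form $\mathcal{S}_1\cup\mathcal{S}_2$, and reject isomorphic copies of the resulting TTS($v$) via a canonical form.

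For each TTS($v$) surviving this enumeration I would build the $2$-BIG (adjacency exactly when two blocks share two points, so repeated blocks are non-adjacent), test whether it is connected — discarding the many decomposable systems whose $2$-BIG is disconnected, e.g.\ those in which $\mathcal{S}_1$ and $\mathcal{S}_2$ share blocks, which saturate pairs and tend to fragment the graph — and then run a backtracking Hamilton-cycle search. These graphs are cubic on at most $b=\lambda v(v-1)/6$ vertices (so at most $26$ vertices when $v=13$), so each Hamiltonicity test is essentially instantaneous; the claim to be confirmed is that every connected instance that arises is Hamiltonian. A bipartiteness check can be included as a sanity test, since it is guaranteed by Lemma~\ref{Lemma:BipartiteTTS}.

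The main obstacle is the size of the $v=13$ case: there are two isomorphism classes of STS($13$), with automorphism groups of orders $39$ and $6$, hence on the order of $1.2\times 10^{9}$ labelled copies of each, and a naive pairing would be around $10^{18}$ ordered pairs — far too many. I would cut this down by noting that, with $\mathcal{S}_1$ fixed, two partners $\mathcal{S}_2$ and $\sigma(\mathcal{S}_2)$ with $\sigma\in\mathrm{Aut}(\mathcal{S}_1)$ yield isomorphic TTS($13$), so it is enough to let $\mathcal{S}_2$ range over $\mathrm{Aut}(\mathcal{S}_1)$-orbit representatives among all STS($13$); combined with generating the STS($13$) directly by orderly extension rather than storing all of them, and with a canonical-form filter (e.g.\ via \texttt{nauty}) on the produced $2$-BIGs, this brings the search into a feasible range (of order $10^{8}$ instances, most immediately discarded for being disconnected). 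The $v=7$ and $v=9$ cases are trivial by comparison ($30$ and $840$ labelled partners respectively). The conclusion to be recorded is that in all cases the connected bipartite $2$-BIGs so obtained are Hamiltonian.
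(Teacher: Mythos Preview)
Your approach is correct and matches the paper's, which simply states that the lemma was obtained by exhaustive computer search and gives no further details; your proposal is in fact more explicit about the enumeration strategy than the paper itself. One small slip: a TTS($13$) has $\lambda v(v-1)/6 = 52$ blocks (you appear to have computed with $\lambda=1$), so the $2$-BIG has $52$ vertices rather than $26$ --- still small enough that the Hamiltonicity test on each cubic instance is essentially instantaneous.
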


Our approach to proving Theorem~\ref{Theorem:ExistenceLargev} is to construct a TTS($u$) whose $2$-BIG is connected bipartite and non-Hamiltonian and embed it within a TTS($v$)  where $v>2u$ in such a way that, after a single trade, the $2$-BIG of the resulting TTS($v$) is bipartite connected and non-Hamiltonian. We therefore prove the following Doyen-Wilson type result which is similar to Lindner's result for embedding pairs of Steiner triple systems \cite{Lindner80}.

\begin{theorem}\label{Theorem:ExtendingnonHamiltonianTTS}
Suppose $u$ and $v$ are admissible integers such that $v>2u$ and $u>13$. If there exists a TTS($u$) whose $2$-BIG is bipartite connected and non-Hamiltonian, then there exists a TTS($v$) whose $2$-BIG is bipartite connected and non-Hamiltonian.
\end{theorem}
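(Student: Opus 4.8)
The plan is to exploit the bipartite-vs-decomposable correspondence of Lemma~\ref{Lemma:BipartiteTTS}, reducing the problem to simultaneously embedding a pair of Steiner triple systems and then performing a single Pasch switch. By Lemma~\ref{Lemma:BipartiteTTS} the hypothesised TTS($u$) is a disjoint union $\mathcal{A}_1\sqcup\mathcal{A}_2$ of two STS($u$) on a common point set $U$, with $\mathcal{A}_1\cap\mathcal{A}_2=\emptyset$ (a repeated block would force a vertex of degree less than $3$ in the $2$-BIG $G_u$). Using a Lindner-type construction~\cite{Lindner80} --- available because the admissibility of $u,v$ together with $v>2u$ meets the Doyen--Wilson threshold --- I would build disjoint STS($v$)s $\mathcal{B}_1\supseteq\mathcal{A}_1$ and $\mathcal{B}_2\supseteq\mathcal{A}_2$ on a point set $V\supseteq U$, so that $\mathcal{B}_1\sqcup\mathcal{B}_2$ is a TTS($v$) containing the given TTS($u$) and, by Lemma~\ref{Lemma:BipartiteTTS} again, has bipartite $2$-BIG.

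Next I would record the structure of this $2$-BIG. Each block of $\mathcal{B}_1\sqcup\mathcal{B}_2$ is of type A (contained in $U$; these are precisely the blocks of the given TTS($u$)), type B (meeting $U$ in one point), or type C (disjoint from $U$); no block meets $U$ in exactly two points, since such a pair of $U$-points is already covered twice within $U$. Hence a type-A block shares at most one point with any non-type-A block, so the type-A vertices of the $2$-BIG are adjacent only to one another and, $G_u$ being connected, form exactly one component. I would choose the embedding so that in addition (i) the type-B and type-C blocks induce a connected subgraph, making $G_u$ and a connected ``remainder'' the only two components of the $2$-BIG; and (ii) $\mathcal{B}_1$ contains a Pasch configuration $P=\{\{a,b,c\},\{a,y,z\},\{b,x,z\},\{c,x,y\}\}$ with $\{a,b,c\}\in\mathcal{A}_1$ and $x,y,z\in V\setminus U$ such that $\{x,y,z\}\notin\mathcal{B}_1\cup\mathcal{B}_2$. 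Every pair occurring in $\{a,y,z\},\{b,x,z\},\{c,x,y\}$ meets $V\setminus U$, so (ii) imposes no constraint internal to $U$; and since $v-u\geq u+1$ there is ample room to arrange both (i) and (ii). Producing an embedding with these features uniformly for all admissible $v>2u$ is where I expect the bulk of the work to lie, and where the hypothesis $u>13$ enters.

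The single trade is the Pasch switch of $P$ for $P'=\{\{a,b,z\},\{a,c,y\},\{b,c,x\},\{x,y,z\}\}$ inside $\mathcal{B}_1$, giving $\mathcal{B}_1'$. Since $P$ and $P'$ cover the same twelve pairs, and none of $\{a,b,z\},\{a,c,y\},\{b,c,x\}$ (each of which meets $U$ in two points) nor $\{x,y,z\}$ lies in $\mathcal{B}_1$, the system $\mathcal{B}_1'$ is again a simple STS($v$); moreover none of these four blocks lies in $\mathcal{B}_2$, so $\mathcal{B}_1'\sqcup\mathcal{B}_2$ is a simple decomposable TTS($v$) and its $2$-BIG $G'$ is bipartite. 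Each of $\{a,b,z\},\{a,c,y\},\{b,c,x\}$ is adjacent both to a type-A block --- the $\mathcal{A}_2$-block through $\{a,b\}$, through $\{a,c\}$, and through $\{b,c\}$ respectively --- and to blocks formerly in the remainder, and $\{x,y,z\}$ is adjacent only to blocks formerly in the remainder; together these new blocks bridge $G_u$ with its vertex $\{a,b,c\}$ deleted (connected, as $G_u$ is $3$-connected) to the remainder, so that $G'$ is connected. (Verifying connectivity carefully requires tracking how the switch changes the remainder, which is why (i) is imposed.)

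For non-Hamiltonicity, let $T$ be the set of type-A vertices of $G'$, so that $T=V(G_u)\setminus\{\{a,b,c\}\}$ and $G'[T]$ equals $G_u$ with the vertex $\{a,b,c\}$ deleted. Let $N_1,N_2,N_3$ denote the neighbours of $\{a,b,c\}$ in $G_u$, that is, the $\mathcal{A}_2$-blocks through $\{a,b\},\{a,c\},\{b,c\}$. Tracking which blocks the switch alters shows that the only edges of $G'$ with exactly one end in $T$ are $N_1\{a,b,z\}$, $N_2\{a,c,y\}$ and $N_3\{b,c,x\}$; every other adjacency of a type-A block is with another type-A block, exactly as in $G_u$. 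A Hamilton cycle of $G'$ crosses this $3$-edge cut an even, and hence (being nonzero) exactly a twofold, number of times, so its intersection with $G'[T]$ is a Hamilton path of that graph between two of $N_1,N_2,N_3$; reinserting the vertex $\{a,b,c\}$ together with its three edges to $N_1,N_2,N_3$ extends this path to a Hamilton cycle of $G_u$, contradicting the hypothesis on $G_u$. Hence $G'$ is non-Hamiltonian, and $\mathcal{B}_1'\sqcup\mathcal{B}_2$ is a TTS($v$) with the required properties.
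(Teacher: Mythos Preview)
Your approach is essentially the one the paper takes: embed the given TTS($u$) into a decomposable TTS($v$) whose 2-BIG has exactly two components (the copy of $G_u$ and a connected ``remainder''), then perform a single Pasch switch that removes one block of the TTS($u$) and creates three bridging blocks with two points in $U$. Your 3-edge-cut argument for non-Hamiltonicity is in fact more explicit than the paper's, which simply says that reversing the trade would yield a Hamilton cycle in $G_u$; your version makes transparent why this works.

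The one point where your outline diverges from the paper is the embedding step. You invoke Lindner's theorem and then say you would ``choose the embedding'' to obtain properties (i) and (ii). The paper expressly notes that Lindner's construction gives no control over connectivity of the 2-BIG, and therefore does \emph{not} use it for Theorem~\ref{Theorem:ExtendingnonHamiltonianTTS}; instead it builds the outer partial TTS($v$) from scratch via difference triples and carefully chosen 1-factorisations (Construction~\ref{Construction:PartialTTS}), and proves connectivity directly (Lemma~\ref{Lemma:ConstructionConnected}). That construction also guarantees the existence of three points $a,b,c\in V\setminus U$ with no block containing two of them, which is exactly what is needed to set up the Pasch (your condition (ii), with the roles of $U$ and $V\setminus U$ swapped). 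So you have correctly located where the real work is, but be aware that it is not a matter of steering Lindner's proof---an independent construction is required, and this is where the hypothesis $u>13$ is actually used.
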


In order to apply Theorem~\ref{Theorem:ExtendingnonHamiltonianTTS}, in Section~\ref{Section:TTS331} we will construct a TTS(331) whose 2-BIG is bipartite connected and non-Hamiltonian.

\section{Construction of first example of order 331}\label{Section:TTS331}

 We begin by considering the bipartite subgraph illustrated in Figure~\ref{Fig-ConfigurationT}, for which it is a simple exercise to confirm that for any Hamilton cycle of any cubic graph containing this subgraph, the two edges $e_1$ and $e_3$ are both in the cycle or both absent from the cycle
(and similarly for the edges $e_2$ and $e_4$).
We want to associate the vertices of this subgraph with blocks of a partial TTS so that the subgraph is a 2-BIG;
one way of doing so is with the blocks listed in Table~\ref{Tbl-ConfigurationT}.
Any set of sixteen blocks on nine points
having the subgraph of Figure~\ref{Fig-ConfigurationT} as its 2-BIG
will be denoted as configuration $\mathbb{T}$.

\begin{figure}[htbp]
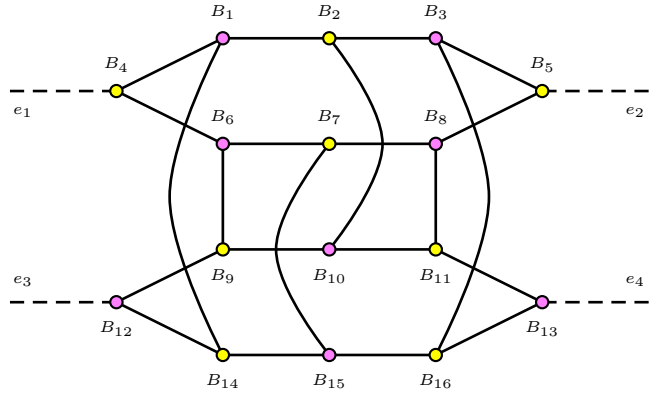

\begin{center}

\pspicture(0,1)(8.4,6)           
\psset{unit=0.7cm}

\psline[linecolor=EdgeColour,linewidth=1pt](2,3)(4,2)
\psline[linecolor=EdgeColour,linewidth=1pt](2,3)(4,4)
\psline[linecolor=EdgeColour,linewidth=1pt](4,2)(8,2)
\psline[linecolor=EdgeColour,linewidth=1pt](4,4)(8,4)
\psline[linecolor=EdgeColour,linewidth=1pt](10,3)(8,2)
\psline[linecolor=EdgeColour,linewidth=1pt](10,3)(8,4)

\psline[linecolor=EdgeColour,linewidth=1pt](4,4)(4,6)
\psline[linecolor=EdgeColour,linewidth=1pt](8,4)(8,6)

\pscurve[linecolor=EdgeColour,linewidth=1pt](6,2)(5,4)(6,6)
\pscurve[linecolor=EdgeColour,linewidth=1pt](6,4)(7,6)(6,8)

\pscurve[linecolor=EdgeColour,linewidth=1pt](4,2)(3,5)(4,8)
\pscurve[linecolor=EdgeColour,linewidth=1pt](8,2)(9,5)(8,8)

\psline[linecolor=EdgeColour,linewidth=1pt](2,7)(4,6)
\psline[linecolor=EdgeColour,linewidth=1pt](2,7)(4,8)
\psline[linecolor=EdgeColour,linewidth=1pt](4,6)(8,6)
\psline[linecolor=EdgeColour,linewidth=1pt](4,8)(8,8)
\psline[linecolor=EdgeColour,linewidth=1pt](10,7)(8,6)
\psline[linecolor=EdgeColour,linewidth=1pt](10,7)(8,8)

\psline[linecolor=EdgeColour,linewidth=1pt,linestyle=dashed](2,3)(0,3)
\psline[linecolor=EdgeColour,linewidth=1pt,linestyle=dashed](2,7)(0,7)
\psline[linecolor=EdgeColour,linewidth=1pt,linestyle=dashed](10,3)(12,3)
\psline[linecolor=EdgeColour,linewidth=1pt,linestyle=dashed](10,7)(12,7)

\pscircle[fillstyle=solid,fillcolor=VertexColourA](2,3){0.95mm}
\pscircle[fillstyle=solid,fillcolor=VertexColour](2,7){0.95mm}
\pscircle[fillstyle=solid,fillcolor=VertexColourA](10,3){0.95mm}
\pscircle[fillstyle=solid,fillcolor=VertexColour](10,7){0.95mm}

\pscircle[fillstyle=solid,fillcolor=VertexColour](4,2){0.95mm}
\pscircle[fillstyle=solid,fillcolor=VertexColourA](6,2){0.95mm}
\pscircle[fillstyle=solid,fillcolor=VertexColour](8,2){0.95mm}

\pscircle[fillstyle=solid,fillcolor=VertexColour](4,4){0.95mm}
\pscircle[fillstyle=solid,fillcolor=VertexColourA](6,4){0.95mm}
\pscircle[fillstyle=solid,fillcolor=VertexColour](8,4){0.95mm}

\pscircle[fillstyle=solid,fillcolor=VertexColourA](4,6){0.95mm}
\pscircle[fillstyle=solid,fillcolor=VertexColour](6,6){0.95mm}
\pscircle[fillstyle=solid,fillcolor=VertexColourA](8,6){0.95mm}

\pscircle[fillstyle=solid,fillcolor=VertexColourA](4,8){0.95mm}
\pscircle[fillstyle=solid,fillcolor=VertexColour](6,8){0.95mm}
\pscircle[fillstyle=solid,fillcolor=VertexColourA](8,8){0.95mm}

\rput[c](4,8.5){\tiny $B_1$}
\rput[c](6,8.5){\tiny $B_2$}
\rput[c](8,8.5){\tiny $B_3$}

\rput[c](2,7.5){\tiny $B_4$}
\rput[c](10,7.5){\tiny $B_5$}

\rput[c](4,6.5){\tiny $B_6$}
\rput[c](6,6.5){\tiny $B_7$}
\rput[c](8,6.5){\tiny $B_8$}

\rput[c](4,3.5){\tiny $B_9$}
\rput[c](6,3.5){\tiny $B_{10}$}
\rput[c](8,3.5){\tiny $B_{11}$}

\rput[c](2,2.5){\tiny $B_{12}$}
\rput[c](10,2.5){\tiny $B_{13}$}

\rput[c](4,1.5){\tiny $B_{14}$}
\rput[c](6,1.5){\tiny $B_{15}$}
\rput[c](8,1.5){\tiny $B_{16}$}

\rput[c](0.25,6.6){\tiny $e_1$}
\rput[c](11.75,6.6){\tiny $e_2$}

\rput[c](0.25,3.4){\tiny $e_3$}
\rput[c](11.75,3.4){\tiny $e_4$}

\endpspicture
\end{center}
\caption{The 2-BIG of configuration $\mathbb{T}$.}
\label{Fig-ConfigurationT}
\end{figure}

\begin{table}[htbp]
\begin{center}
$
\begin{array}{cc@{\hspace*{15mm}}cc}
B_1 &   \{4,5,6\} & B_9 &   \{3,5,9\}\\
B_2 &   \{4,5,8\} & B_{10} &   \{5,8,9\}\\
B_3 &   \{4,7,8\} & B_{11} &   \{1,8,9\}\\
B_4 &   \{3,4,6\} & B_{12} &   \{2,3,5\}\\
B_5 &   \{1,4,7\} & B_{13} &   \{1,2,8\}\\
B_6 &   \{3,6,9\} & B_{14} &   \{2,5,6\}\\
B_7 &   \{6,7,9\} & B_{15} &   \{2,6,7\}\\
B_8 &   \{1,7,9\} & B_{16} &   \{2,7,8\}

\end{array}
$
\end{center}
\caption{Blocks for configuration $\mathbb{T}$.}
\label{Tbl-ConfigurationT}
\end{table}

We initially discovered configuration $\mathbb{T}$ by observing that it occurs within some TTSs of order 13.
To construct a TTS(331) with the properties we desire,
we will perform a number of operations that entail embeddings of configuration $\mathbb{T}$.
As an intermediate goal, we exploit the twinned behaviour of edges $e_1$ and $e_3$ to develop a configuration that forbids Hamilton cycles.
We then show how to embed such an obstructing configuration into a TTS.

To some extent our approach is similar to the constructions of other counter-examples to Tutte's conjecture,
whereby a bipartite cubic Hamiltonian graph with a twinned pair of edges
is used to construct a bipartite cubic connected non-Hamiltonian graph
(for instance, see~\cite{EllinghamHorton1983,Georges1989}
as well as
Exercise 4.2.14 of~\cite{BondyMurty}).
However, we additionally require our graphs to be 2-BIGs of (partial) TTSs,
which is not the case with these known counter-examples to Tutte's conjecture.

In Figure~\ref{Fig-ConfigurationX} we depict a configuration on 34 blocks and 16 points, denoted as configuration $\mathbb{X}$.
For the $\mathbb{T}$ configuration on the left (for which only the four vertices with external neighbours are shown),
use the 16 blocks from Table~\ref{Tbl-ConfigurationT}.
For the blocks of the $\mathbb{T}$ configuration on the right, apply the function
$f_\mathbb{X}$ to each block of Table~\ref{Tbl-ConfigurationT}, where $f_\mathbb{X}$
maps point 1 (resp.~2, 3, 4) to 11 (resp.~1, 2, 10) and $f_\mathbb{X}(x)=x+7$ for each point $x \in \{5,\ldots,9\}$.
The other two blocks are  $\{3,4,10\}$ and $\{2,3,10\}$.
Observe that for any cubic graph having the 2-BIG of configuration $\mathbb{X}$ as a subgraph, the edge $e_\mathbb{X}$ must be in every Hamilton cycle.
Moreover, vertex $\{2,3,10\}$ is incident with $e_\mathbb{X}$.

\begin{figure}[htbp]
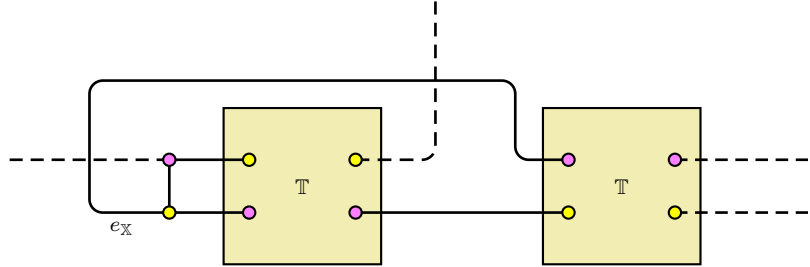

\begin{center}
\pspicture(1,2)(7,6)
\psset{unit=0.7cm}

\psframe[fillstyle=solid,fillcolor=lightkhaki](2,3)(5,6)
\psframe[fillstyle=solid,fillcolor=lightkhaki](8,3)(11,6)
\rput[c](3.5,4.5){\tiny\scriptsize $\mathbb{T}$}
\rput[c](9.5,4.5){\tiny\scriptsize $\mathbb{T}$}

\psline[linecolor=EdgeColour,linewidth=1pt](1,4)(1,5)
\psline[linecolor=EdgeColour,linewidth=1pt](1,5)(2.5,5)
\psline[linecolor=EdgeColour,linewidth=1pt](1,4)(2.5,4)

\psline[linecolor=EdgeColour,linewidth=1pt](4.5,4)(8.5,4)
\psline[linecolor=EdgeColour,linewidth=1pt,linearc=0.25,showpoints=false](1,4)(-0.5,4)(-0.5,6.5)(7.5,6.5)(7.5,5)(8.5,5)

\psline[linecolor=EdgeColour,linewidth=1pt,linestyle=dashed](10.5,5)(13,5)
\psline[linecolor=EdgeColour,linewidth=1pt,linestyle=dashed](10.5,4)(13,4)
\psline[linecolor=EdgeColour,linewidth=1pt,linestyle=dashed](-2,5)(1,5)
\psline[linecolor=EdgeColour,linewidth=1pt,linestyle=dashed,linearc=0.25](4.5,5)(6,5)(6,8)

\pscircle[fillstyle=solid,fillcolor=VertexColourA](1,5){0.95mm}
\pscircle[fillstyle=solid,fillcolor=VertexColour](1,4){0.95mm}

\pscircle[fillstyle=solid,fillcolor=VertexColourA](2.5,4){0.95mm}
\pscircle[fillstyle=solid,fillcolor=VertexColour](2.5,5){0.95mm}
\pscircle[fillstyle=solid,fillcolor=VertexColour](4.5,5){0.95mm}
\pscircle[fillstyle=solid,fillcolor=VertexColourA](4.5,4){0.95mm}

\pscircle[fillstyle=solid,fillcolor=VertexColour](8.5,4){0.95mm}
\pscircle[fillstyle=solid,fillcolor=VertexColourA](8.5,5){0.95mm}
\pscircle[fillstyle=solid,fillcolor=VertexColourA](10.5,5){0.95mm}
\pscircle[fillstyle=solid,fillcolor=VertexColour](10.5,4){0.95mm}

\rput[c](0.1,3.7){\tiny\scriptsize $e_\mathbb{X}$}

\endpspicture
\end{center}
\caption{Configuration $\mathbb{X}$}
\label{Fig-ConfigurationX}
\end{figure}

In Figure~\ref{Fig-ConfigurationP} we illustrate configuration $\mathbb{P}$, which has 36 blocks and 16 points.
For the $\mathbb{T}$ configuration on the left, we again use the 16 blocks from Table~\ref{Tbl-ConfigurationT}.
For the blocks of the $\mathbb{T}$ configuration on the right, apply the function
$f_\mathbb{P}$ to each block of Table~\ref{Tbl-ConfigurationT}, where $f_\mathbb{P}$
maps point 1 (resp.~2, 3, 4) to 2 (resp.~1, 11, 10) and $f_\mathbb{P}(x)=x+7$ for each point $x \in \{5,\ldots,9\}$.
The other four blocks are  $\{3,4,10\}$, $\{2,3,10\}$, $\{4,10,11\}$ and $\{1,4,11\}$.
Note that 2-BIG of configuration $\mathbb{P}$ is a bipartite cubic graph in which the edge $e_\mathbb{P}$ is in every Hamilton cycle.
Moreover, vertex $\{2,3,10\}$ is incident with $e_\mathbb{P}$.

\begin{figure}[htbp]
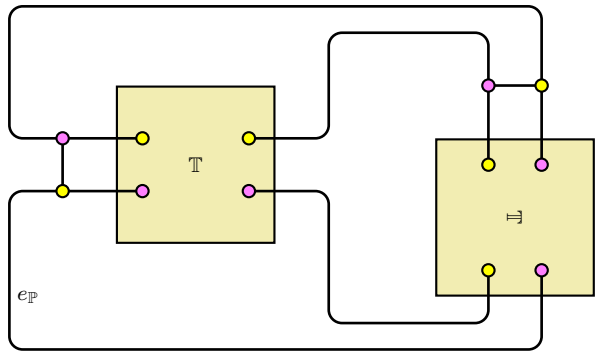

\begin{center}
\pspicture(1,1)(6,6)
\psset{unit=0.7cm}

\psframe[fillstyle=solid,fillcolor=lightkhaki](2,3)(5,6)
\psframe[fillstyle=solid,fillcolor=lightkhaki](8,2)(11,5)
\rput[c](3.5,4.5){\tiny\scriptsize $\mathbb{T}$}
\rput[c](9.5,3.5){\tiny\scriptsize \rotateright{$\mathbb{T}$}}

\psline[linecolor=EdgeColour,linewidth=1pt](1,4)(1,5)
\psline[linecolor=EdgeColour,linewidth=1pt](1,5)(2.5,5)
\psline[linecolor=EdgeColour,linewidth=1pt](1,4)(2.5,4)

\psline[linecolor=EdgeColour,linewidth=1pt](9,6)(10,6)
\psline[linecolor=EdgeColour,linewidth=1pt](9,6)(9,4.5)
\psline[linecolor=EdgeColour,linewidth=1pt](10,6)(10,4.5)

\psline[linecolor=EdgeColour,linewidth=1pt,linearc=0.25](4.5,4)(6,4)(6,1.5)(9,1.5)(9,2.5)
\psline[linecolor=EdgeColour,linewidth=1pt,linearc=0.25](4.5,5)(6,5)(6,7)(9,7)(9,6)
\psline[linecolor=EdgeColour,linewidth=1pt,linearc=0.25](1,5)(0,5)(0,7.5)(10,7.5)(10,6)
\psline[linecolor=EdgeColour,linewidth=1pt,linearc=0.25](1,4)(0,4)(0,1)(10,1)(10,2.5)

\pscircle[fillstyle=solid,fillcolor=VertexColourA](1,5){0.95mm}
\pscircle[fillstyle=solid,fillcolor=VertexColour](1,4){0.95mm}
\pscircle[fillstyle=solid,fillcolor=VertexColourA](9,6){0.95mm}
\pscircle[fillstyle=solid,fillcolor=VertexColour](10,6){0.95mm}

\pscircle[fillstyle=solid,fillcolor=VertexColourA](2.5,4){0.95mm}
\pscircle[fillstyle=solid,fillcolor=VertexColour](2.5,5){0.95mm}
\pscircle[fillstyle=solid,fillcolor=VertexColour](4.5,5){0.95mm}
\pscircle[fillstyle=solid,fillcolor=VertexColourA](4.5,4){0.95mm}

\pscircle[fillstyle=solid,fillcolor=VertexColour](9,4.5){0.95mm}
\pscircle[fillstyle=solid,fillcolor=VertexColourA](10,4.5){0.95mm}
\pscircle[fillstyle=solid,fillcolor=VertexColour](9,2.5){0.95mm}
\pscircle[fillstyle=solid,fillcolor=VertexColourA](10,2.5){0.95mm}

\rput[c](0.35,2){\tiny\scriptsize $e_\mathbb{P}$}

\endpspicture
\end{center}
\caption{Configuration $\mathbb{P}$}
\label{Fig-ConfigurationP}
\end{figure}

We will next employ several constructions that bear similarities to one that is attributed to D.A.~Holton in~\cite{Ellingham1981}.
Holton's construction begins with a bipartite cubic graph $G_1$
having a vertex $v_1$ with neighbours $x_1$, $y_1$ and $z_1$ such that the edge $\{v_1, z_1\}$ is in no Hamilton cycle.
Let $G_2$ also be a bipartite cubic graph for which there is a vertex $v_2$ with neighbours $x_2$, $y_2$ and $z_2$ such that the edge
$\{v_2,z_2\}$ is in no Hamilton cycle.
For each $i \in \{1,2\}$ remove vertex $v_i$ from $G_i$, leaving a severed edge
dangling from each of $x_i$, $y_i$ and $z_i$, and then bind these dangling edges together to create a bipartite cubic graph
on $|V(G_1)| + |V(G_2)| -2$ vertices.  Holton is careful to bind the severed edge dangling from $x_1$ to the one dangling from $z_2$,
and to bind the severed edge dangling from $z_1$ to the one dangling from $x_2$, thereby ensuring that the resultant graph is
non-Hamiltonian.

We will call operations of this nature {\it splicing} operations;
such terminology was previously used in~\cite{Pike1997}.
Although the description of Holton's construction as presented in~\cite{Ellingham1981} requires that $G_2$ be an isomorphic copy of $G_1$,
we will allow $G_1$ and $G_2$ to be non-isomorphic.
We will also relax the cubic requirement, allowing one of the graphs $G_1$ and $G_2$ to be a non-regular graph of maximum degree 3,
as would be the case for the 2-BIG of configuration $\mathbb{X}$.
Moreover, although Holton's construction specifies an edge in each of $G_1$ and $G_2$ that is in no Hamilton cycle,
we will begin with edges that have the property of being in every Hamilton cycle, as is the case with the
edges $e_\mathbb{X}$ and $e_\mathbb{P}$ of configurations $\mathbb{X}$ and $\mathbb{P}$ respectively.

For our first splicing operation, take two copies of configuration $\mathbb{P}$, say $\mathbb{P}_1$ and $\mathbb{P}_2$.
For each $i \in \{1,2\}$, remove a vertex $v_i$ that is incident with an edge $e_i$ that must be in every Hamilton cycle of the 2-BIG of $\mathbb{P}_i$.
Now bind the six dangling edges so that $e_1$ and $e_2$ are not bound to each other.
In the resulting graph, which we denote $\mathbb{P} \asymp \mathbb{P}$, 
the edges arising from each of $e_1$ and $e_2$ have the property of being in every Hamilton cycle in $\mathbb{P} \asymp \mathbb{P}$.
Hence the third edge that is formed during the splicing operation has the property of being in no Hamilton cycle.

What remains to be confirmed is that $\mathbb{P} \asymp \mathbb{P}$ can be realised as the 2-BIG of a configuration of 70 blocks of a partial TTS.
For configuration $\mathbb{P}_1$ take the 36 blocks on point set $\{1,2,\ldots,16\}$ that are used for our initial description of the $\mathbb{P}$ configuration.
For the blocks of configuration $\mathbb{P}_2$, apply the function $f_{\mathbb{P}_2}$ to each block of $\mathbb{P}_1$,
where $f_{\mathbb{P}_2}$ maps point 2 (resp.~3, 10) to 3 (resp.~2, 10)
and the remaining points of $\mathbb{P}_1$ ({\it viz.},  $1, 4, 5, \ldots, 9, 11, 12, \ldots, 16$) are mapped to $17, 18, 19, \ldots, 29$, respectively.
Each of $\mathbb{P}_1$ and $\mathbb{P}_2$ contains a block $\{2,3,10\}$, which is the block that we delete from each of $\mathbb{P}_1$ and $\mathbb{P}_2$
as we commence the splicing operation.  The binding of severed edges is now naturally determined by shared pairs of points, so that
vertices $\{3,4,10\}$, $\{2,3,5\}$ and $\{2,10,14\}$ of $\mathbb{P}_1$ are made adjacent to
$\{3,10,27\}$, $\{2,3,19\}$ and $\{2,10,18\}$ of $\mathbb{P}_2$, respectively.
Hence $\mathbb{P} \asymp \mathbb{P}$ does indeed correspond to a configuration of blocks as desired.
Moreover, the edge between $\{2,3,5\}$ and $\{2,3,19\}$
is in no Hamilton cycle of the 2-BIG of $\mathbb{P} \asymp \mathbb{P}$.

We now perform a second splicing operation, this time working with a copy of configuration $\mathbb{X}$ and a copy of configuration $\mathbb{P}$.
For the $\mathbb{X}$ configuration, 
take the 34 blocks on point set $\{1,2,\ldots,16\}$ that are used for our initial description of the $\mathbb{X}$ configuration.
For the blocks of the $\mathbb{P}$ configuration, apply the same function $f_{\mathbb{P}_2}$ that was used in building $\mathbb{P} \asymp \mathbb{P}$
to each block of our initial $\mathbb{P}$ configuration.
Again, delete two instances of block $\{2,3,10\}$ and then let $\mathbb{X} \asymp \mathbb{P}$ denote the resulting configuration 
on 68 blocks and 29 points.  
It now follows that the edge between $\{2,3,5\}$ and $\{2,3,19\}$
is in no Hamilton cycle of any cubic graph that has the 2-BIG of $\mathbb{X} \asymp \mathbb{P}$ as a subgraph.

For our third and final splicing operation we will use $\mathbb{X} \asymp \mathbb{P}$ together with $\mathbb{P} \asymp \mathbb{P}$,
each of which contains an edge that is in no Hamilton cycle (as is the case with Holton's construction).
For the $\mathbb{X} \asymp \mathbb{P}$ configuration,
take the $\mathbb{X} \asymp \mathbb{P}$ configuration on point set $\{1,2,\ldots,29\}$ that is described in the preceding paragraph.
For the blocks of the $\mathbb{P} \asymp \mathbb{P}$ configuration, apply the function $f_{\mathbb{P} \asymp \mathbb{P}}$ to each block 
of the $\mathbb{P} \asymp \mathbb{P}$ configuration that was previously described on point set $\{1,2,\ldots,29\}$,
where $f_{\mathbb{P} \asymp \mathbb{P}}$ maps point 2 (resp.~3, 5) to 5 (resp.~3, 2)
and the remaining points ({\it viz.},  $1, 4, 6, 7, \ldots, 29$) are mapped to $30, 31, 32, \ldots, 55$, respectively.
Now remove the two instances of the block $\{2,3,5\}$
and denote the resulting configuration of 136 blocks on 55 points by $\mathbb{F}$.

Observe that any cubic graph that has the 2-BIG of configuration $\mathbb{F}$ as a subgraph cannot be Hamiltonian.
So our goal now is to embed configuration $\mathbb{F}$ into a TTS, thereby ensuring that the 2-BIG of the TTS is non-Hamiltonian. We also require that the 2-BIG of the TTS is bipartite and  we show in the following lemma that this is equivalent to the TTS being composed from two STSs.

\begin{lemma}\label{Lemma:BipartiteTTS}
The $2$-BIG of a (partial) TTS($v$), $(V,\mathcal{B})$ is bipartite if and only if the blocks of $\mathcal{B}$ can be partitioned into the blocks of two (partial) STS($v$), $(V,\mathcal{S}_1)$ and $(V,\mathcal{S}_2)$.
\end{lemma}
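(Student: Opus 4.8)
The plan is to prove both implications directly from the defining properties, using the single elementary observation that in a (partial) TTS two distinct blocks contribute an edge to the $2$-BIG precisely when they are the two blocks through a common pair of points. So I would first fix notation: let $G$ denote the $2$-BIG, with vertex set $\mathcal{B}$, and recall that $B_1B_2\in E(G)$ means $|B_1\cap B_2|=2$, which for triples is equivalent to saying that $B_1$ and $B_2$ are distinct and share a (necessarily unique) pair $\{a,b\}$. I would read ``$G$ is bipartite'' as ``$G$ admits a proper $2$-colouring'' (the $2$-BIG may be disconnected, but that is all the argument uses).

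For the ($\Leftarrow$) direction, suppose $\mathcal{B}=\mathcal{S}_1\sqcup\mathcal{S}_2$ with each $(V,\mathcal{S}_i)$ a partial STS$(v)$. I would show that $G$ has no edge within either $\mathcal{S}_i$: if $B_1,B_2\in\mathcal{S}_i$ were adjacent in $G$ they would be two distinct blocks sharing a pair, and that pair would then lie in two blocks of the partial Steiner system $\mathcal{S}_i$, contradicting the definition of a partial STS. Hence $(\mathcal{S}_1,\mathcal{S}_2)$ is a bipartition of $G$, so $G$ is bipartite.

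For the ($\Rightarrow$) direction, fix a proper $2$-colouring of $G$ and let $\mathcal{S}_1,\mathcal{S}_2$ be its colour classes. I claim each $(V,\mathcal{S}_i)$ is a partial STS$(v)$, i.e. covers every pair at most once. Suppose instead that some pair $\{a,b\}$ lies in two blocks $B_1,B_2\in\mathcal{S}_i$. Since $(V,\mathcal{B})$ is a (partial) TTS, $\{a,b\}$ lies in at most two blocks of $\mathcal{B}$, hence in exactly these two, and $B_1,B_2\in\mathcal{S}_i$. If $B_1\ne B_2$ as sets then $|B_1\cap B_2|=2$, so $B_1B_2$ is an edge of $G$ inside the colour class $\mathcal{S}_i$, contradicting properness. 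This establishes the claim in the generic case.

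The one point that needs care — and the only real obstacle — is the degenerate possibility that $B_1$ and $B_2$ are two copies of a single repeated triple: then no argument about $G$-edges applies, because such a triple is an isolated vertex of $G$ (all three of its pairs are already used up by its two copies, so it meets no other block in two points). I would handle this by observing that isolated vertices may be coloured arbitrarily, so we simply place one copy of each repeated triple in $\mathcal{S}_1$ and the other in $\mathcal{S}_2$; this does not disturb properness of the colouring nor the coverage count of any other pair (the copies contain only that triple's own pairs), and afterwards each $\mathcal{S}_i$ covers every pair at most once. Thus $\mathcal{B}$ partitions into two partial STS$(v)$, completing the proof.
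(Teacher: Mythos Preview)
Your proof is correct and follows essentially the same argument as the paper's: both directions are handled directly from the observation that an edge in the $2$-BIG corresponds to two distinct blocks sharing a pair. Your treatment is in fact slightly more careful than the paper's own proof. In the $(\Rightarrow)$ direction the paper simply fixes a bipartition $\mathcal{S}_1,\mathcal{S}_2$ of the $2$-BIG and asserts that no pair occurs twice within $\mathcal{S}_i$ ``otherwise there would be an edge between the corresponding vertices in $\mathcal{S}_i$''; this tacitly assumes the two blocks through the pair are distinct as sets, and so does not explicitly cover the degenerate case of a repeated triple (whose two copies are isolated in the $2$-BIG and could, under an arbitrary $2$-colouring, both land in the same class). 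Your extra paragraph recolouring such isolated copies one into each class closes that small gap while leaving the overall approach unchanged.
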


\begin{proof} First suppose that the blocks of $\mathcal{B}$ can be partitioned into two (partial) STS($v$). If there is an edge $\{X,Y\}$ in the $2$-BIG of $\mathcal{B}$ then the blocks corresponding to $X$ and $Y$ have two points in common and must be in distinct partial STS($v$). Hence the $2$-BIG is bipartite. 

Now suppose that the $2$-BIG corresponding to $(V,\mathcal{B})$  is bipartite with partition set $\mathcal{S}_1$ and $\mathcal{S}_2$. Consider the set of blocks given by vertices in $\mathcal{S}_1$. No pair of points occurs more than once in this set of blocks, otherwise there would be an edge between the corresponding vertices in $\mathcal{S}_1$. Thus, because $\mathcal{B}$ is a (partial) TTS($v$), the blocks corresponding to vertices in $\mathcal{S}_1$ form a (partial) STS($v$). Likewise, the blocks corresponding to vertices of $\mathcal{S}_2$ form a (partial) STS($v$).
\end{proof}

We note that  the 2-BIG of $\mathbb{F}$ is a bipartite graph and $\mathbb{F}$  contains no repeated blocks. By Lemma~\ref{Lemma:BipartiteTTS}, the blocks in $\mathbb{F}$ can therefore be partitioned into the blocks of two block-disjoint partial STS. Here we turn to a result by Lindner that shows that $\mathbb{F}$ can indeed be embedded into a simple TTS of order 331 (and many larger orders as well).

\begin{theorem}[See Theorem 7.2 of~\cite{Lindner80}]\label{Theorem:Lindner}
Let $(U,\mathcal{P}_1)$ and $(U,\mathcal{P}_2)$ be partial STS($u$). 
Then for every admissible $v \geq 6u+1$, there exists a pair of STS($v$), $(V,\mathcal{S}_1)$ and $(V,\mathcal{S}_2)$ 
such that 
$(U,\mathcal{P}_1)$ is embedded in $(V,\mathcal{S}_1)$,
$(U,\mathcal{P}_2)$ is embedded in $(V,\mathcal{S}_2)$
and $\mathcal{P}_1 \cap \mathcal{P}_2 = \mathcal{S}_1 \cap \mathcal{S}_2$.
\end{theorem}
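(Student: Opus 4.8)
The plan is to follow the standard machinery for embedding partial Steiner triple systems into larger ones, but modified so that the two completions $\mathcal{S}_1$ and $\mathcal{S}_2$ are built side by side and forced to coincide only on their common part. Write $\mathcal{C}=\mathcal{P}_1\cap\mathcal{P}_2$, take $V=U\cup W$ where $W$ is a set of $v-u\geq 5u+1$ new points, and note that since $u$ and $v$ are admissible they are odd, so $|W|$ is even. The goal is to produce STS($v$) $(V,\mathcal{S}_1)$ and $(V,\mathcal{S}_2)$ such that (i) $\mathcal{P}_i\subseteq\mathcal{S}_i$; (ii) the only blocks of $\mathcal{S}_i$ contained in $U$ are the blocks of $\mathcal{P}_i$; and (iii) every block that meets $W$ belongs to at most one of $\mathcal{S}_1$ and $\mathcal{S}_2$. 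Conditions (i) and (ii) force the blocks common to $\mathcal{S}_1$ and $\mathcal{S}_2$ that are contained in $U$ to be exactly $\mathcal{P}_1\cap\mathcal{P}_2=\mathcal{C}$, while (iii) handles the remaining blocks; hence $\mathcal{S}_1\cap\mathcal{S}_2=\mathcal{C}$.

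First I would analyse the leaves. For $i\in\{1,2\}$ let $L_i$ be the graph on $U$ whose edges are the pairs covered by no block of $\mathcal{P}_i$. Since the blocks of $\mathcal{P}_i$ through a fixed point of $U$ pair up its covered neighbours, every vertex of $L_i$ has degree of the same parity as $u-1$, hence even; so $L_i$ is a union of edge-disjoint cycles and admits a proper edge-colouring with at most three colours. Using such a colouring I would assign to each edge $\{x,z\}$ of $L_i$ an ``attachment point'' $\lambda_i(\{x,z\})\in W$, with the attachment points of the edges at any fixed vertex all distinct; the triple $\{x,z,\lambda_i(\{x,z\})\}$ is then declared a block of $\mathcal{S}_i$, and it covers exactly the pair $\{x,z\}$ inside $U$ and no pair inside $W$. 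Because $|W|$ is large there is ample freedom to choose $\lambda_1$ and $\lambda_2$ so that, on the pairs uncovered in both $\mathcal{P}_1$ and $\mathcal{P}_2$, the two attachment triples differ; then no attachment triple is common to $\mathcal{S}_1$ and $\mathcal{S}_2$.

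Next I would cover the pairs that meet $W$. After the attachment triples are placed, each $x\in U$ still needs the pairs $\{x,w\}$ for $w$ ranging over a set $W_x^{(i)}\subseteq W$ of even size (the points of $W$ not used as attachments for $x$), and these must be supplied by triples $\{x,a,b\}$ with $a,b\in W_x^{(i)}$; equivalently, one chooses a perfect matching on $W_x^{(i)}$ for every $x$, and these matchings together with some triples contained in $W$ must cover every pair inside $W$ exactly once. I would realise this by imposing on $W$ --- possibly augmented by a bounded number of auxiliary points, which is where the slack between the optimal bound $2u+1$ and the bound $6u+1$ used here comes from --- a suitably resolvable ingredient (a near-one-factorisation or a frame, together with a sub-STS on the complementary part), assigning pairwise compatible pieces of it to the points of $U$, and finally completing the remaining pairs inside $W$ with a Steiner triple system on $W$ (or on the augmented set), which exists because that set is large and of admissible type. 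Running this twice, with disjoint assignments for $\mathcal{S}_1$ and $\mathcal{S}_2$ and with two block-disjoint Steiner triple systems for the final completion, yields $\mathcal{S}_1$ and $\mathcal{S}_2$ whose $W$-meeting blocks are disjoint, so (iii) holds.

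Finally, to cover all admissible $v\geq 6u+1$ rather than a single value, I would either set up the ingredient on $W$ uniformly as a function of $v$, or establish the two smallest admissible values directly and pass to larger admissible $v$ by a construction that adds two points at a time while preserving both the embedding and the equality $\mathcal{S}_1\cap\mathcal{S}_2=\mathcal{C}$. The hard part is not any single ingredient but the simultaneous bookkeeping: one must check that the attachment colourings, the matching assignments, and the final triple systems can all be chosen for $i=1$ and $i=2$ at once so that each $\mathcal{S}_i$ is a genuine STS($v$) extending $\mathcal{P}_i$ while no block outside $\mathcal{C}$ is shared. The generous bound $v\geq 6u+1$ is precisely what provides enough room to carry out these choices independently; the full verification is due to Lindner~\cite{Lindner80}.
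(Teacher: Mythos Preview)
The paper does not prove this theorem; it is quoted from Lindner~\cite{Lindner80} and used as a black box.  The only hint the paper gives about Lindner's actual argument is the parenthetical remark that the construction ``incorporates constructions by Cruse~\cite{Cruse1974}, Evans~\cite{Evans1960} and Ryser~\cite{Ryser1951} when $v \equiv 1 \mod 6$''.  In other words, Lindner's proof proceeds via the standard correspondence between (partial) Steiner triple systems and (partial) idempotent commutative quasigroups, and then invokes Latin-square embedding theorems.  Your sketch takes a genuinely different route --- working directly with leave graphs, attachment triples, and factorisations on the set $W$ of new points --- which is closer in spirit to direct Doyen--Wilson-type arguments than to Lindner's Latin-square machinery.

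Your outline also contains a slip: the leave $L_i$ having all vertices of even degree does \emph{not} imply it admits a proper edge-colouring with at most three colours (take $L_i=K_5$, which is $4$-regular with chromatic index $5$).  What you actually need for the attachment step is a proper edge-colouring of $L_i$ with colours drawn from $W$; Vizing's theorem gives one with at most $\Delta(L_i)+1\leq u$ colours, comfortably inside $|W|\geq 5u+1$, so the step is salvageable but not as stated.  Beyond that, your sketch is vague at exactly the load-bearing points --- how the matchings on the sets $W_x^{(i)}$ can be chosen so that, together with a residual triple system on $W$, every pair inside $W$ is covered exactly once, and how to do this twice with block-disjoint outcomes --- and you yourself defer the ``full verification'' to Lindner.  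Since the paper likewise simply cites Lindner, there is no in-paper proof to compare against; just be aware that what you have outlined is not Lindner's method.
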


However, this result by Lindner does not provide any assurance that a connected 2-BIG will result from any of the
TTSs that are produced, whereas we specifically seek 2-BIGs that are connected.
We therefore implemented the construction outlined in the proof of Lindner's theorem
(which itself incorporates constructions by 
Cruse~\cite{Cruse1974}, Evans~\cite{Evans1960} and Ryser~\cite{Ryser1951} when $v \equiv 1$ (mod 6))
and built an actual instance of a TTS(331) in which configuration $\mathbb{F}$ is embedded.
It was then a relatively straightforward task to confirm that the 2-BIG of this particular TTS(331) is indeed connected.
As some steps in the construction entail elements of choice, we provide 
the full list of 36410 blocks of our TTS(331) within a supplementary data file.



\section{Embedding TTS and maintaining connected 2-BIG}
In this section we prove our main result, Theorem~\ref{Theorem:ExistenceLargev}.
Our overall approach is to obtain Theorem~\ref{Theorem:ExtendingnonHamiltonianTTS} which is a generalised Doyen-Wilson result for constructing a TTS with connected bipartite non-Hamiltonian 2-BIGs. We then apply this result to the TTS($331$) constructed in Section~\ref{Section:TTS331}.

We will first use collections of difference triples to construct a pair of block-disjoint partial STS($v$). The $2$-BIG of the resulting partial TTS($v$) is clearly bipartite since the partial TTS($v$) is decomposable to two partial STS($v$)s, and we use properties of the difference triples to show that it is connected.  Furthermore, the resulting partial TTS($v$) can be completed to a TTS($v$) by adding the blocks of a TTS($u$).  We show that non-Hamiltonicity of the $2$-BIG is preserved in this construction by first constructing a TTS($v$) whose $2$-BIG has two components, one of which corresponds to the original TTS($u$), and then performing a single trade to connect the two components.


The following notation and preliminary results will be required throughout the construction.

A \emph{circulant} graph with vertex set $\Z_w$ is a graph with edge set $E=\{\{i,i+d\mod{w}\}: i\in \Z_w, d\in S\}$ where $S\subseteq \{1,\ldots, \lfloor \frac{w}{2}\rfloor \}$ is the generating set of differences; we denote this graph by $\circ(w,S)$. 
In a circulant graph $\circ(w,S)$, the \emph{order} of an edge $\{i,i+d\}$ is the order of $d$ in $\Z_w$. We say that an edge in $\circ(w, S)$ is \emph{even} if it has even order.

For a $1$-factor $F$ on vertex set $W$ and a vertex $\infty\not\in W$, we define the following blocks on $W\cup\{\infty\}$ as $F\vee\infty=\{\{x,y,\infty\}: \{x,y\} \in F\}$. 
We will use the following results to construct $1$-factorisations from which we build blocks of the form $F\vee \infty$, where $F$ is a $1$-factor of $\mathbb{Z}_w$.

\begin{lemma}[{\cite[Lemma 2]{SternLenz80}}]\label{Lemma:1factorisationSL}
Let $G$ be a circulant graph with vertex set $\mathbb{Z}_w$. If $G$ contains an edge of even order, then $G$ can be $1$-factorised.
\end{lemma}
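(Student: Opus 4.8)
The plan is to build the required $1$-factorisation difference class by difference class, using the even-order edge to repair those classes that cannot be split on their own. Write $G=\circ(w,S)$, let $d\in S$ be a difference of even order $t$ in $\Z_w$, and for $s\in S$ set $E_s=\{\{x,x+s\}:x\in\Z_w\}$, so that $E(G)=\bigcup_{s\in S}E_s$. First I would record two elementary facts: since $t\mid w$ the integer $w$ is even, and since $(t/2)d$ has order exactly $2$ (because $0<t/2<t$) it equals the unique involution $w/2$ of $\Z_w$, so $w/2\in\langle d\rangle$. Then dispose of the easy classes. If $s=w/2$ then $E_s$ is itself a perfect matching. If $s\neq w/2$ has even order then $E_s$ is a disjoint union of cycles of even length $\ord(s)$, and each even cycle splits into two perfect matchings, so $E_s$ contributes two $1$-factors. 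The only obstruction is a class $E_s$ with $s$ of odd order $\ell$: then $E_s$ is a disjoint union of $w/\ell$ cycles of odd length $\ell$, and since $w$ is even $w/\ell$ is even, so by itself this graph has no $1$-factorisation.

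To repair an odd-order class I would use the translation $\sigma\colon x\mapsto x+w/2$, an automorphism of $G$ of order $2$ with no fixed vertex. Since $\ell=\ord(s)$ is odd we have $w/2\notin\langle s\rangle$, so $\sigma$ fixes no cycle of $E_s$ and instead pairs the $w/\ell$ odd cycles of $E_s$ into $w/(2\ell)$ partner pairs $\{C,\sigma(C)\}$. Having $w/2\in\langle d\rangle$ is exactly what is needed here: $\sigma$ acts on each cycle of $E_d$ as the antipodal map, so the class $E_d$ carries a joining (``rung'') structure between each cycle $C$ of $E_s$ and its partner $\sigma(C)$. In the special case $w/2\in S$ this rung structure is literally the matching $E_{w/2}$, and then $C\cup\sigma(C)$ together with the $\ell$ rungs between them is a prism $C_\ell\,\square\,K_2$, which is $1$-factorisable. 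So I would group the odd-order classes together with $E_d$ (or with $E_{w/2}$ when present) into a family of subgraphs, each of which is a connected circulant whose order is a multiple of $t=|\langle d\rangle|$ and hence even, and then $1$-factorise each of these pieces, the odd prism being the prototype.

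\textbf{Main obstacle.} The hard part will be precisely this last step: establishing that the even-order circulant pieces assembled from the odd-order classes and from $E_d$ really do admit $1$-factorisations. When more than one odd-order difference occurs, the single matching $E_{w/2}$ no longer suffices as a set of rungs and the clean prism description breaks down; peeling off $E_d$ first also fails, since the remaining circulant on the odd-order differences alone has only odd-order components. For the degree-$4$ pieces one can appeal to the Hamilton decomposition of connected $4$-regular circulants; more robustly, for the residual connected case one appeals directly to the cited result of Stern and Lenz, whose proof handles the general situation by a more uniform (algebraic) argument rather than by case analysis. Finally, once every difference class (or repaired bundle of classes) has been given its own $1$-factorisation, the union of all these $1$-factors is a $1$-factorisation of $G$, as required.
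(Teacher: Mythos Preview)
The paper does not supply a proof of this lemma; it is quoted from Stern and Lenz and used as a black box throughout Section~3. So there is no proof in the paper to compare your attempt against, only the original source.

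Your reduction is correct and well organised through the classification of the difference classes: the observations that $w$ must be even, that $w/2\in\langle d\rangle$, that each even-order class $E_s$ splits into two $1$-factors on its own, and that the odd cycles of an odd-order class come in pairs under the fixed-point-free involution $\sigma\colon x\mapsto x+w/2$, are exactly the right structural ingredients, and the prism picture for $E_s\cup E_{w/2}$ is valid. The gap is the step you yourself flag as the main obstacle. When two or more differences have odd order, your plan is to bundle all of them together with $E_d$ and $1$-factorise the resulting connected even-order circulant components. But the assertion ``every connected circulant of even order is $1$-factorisable'' is \emph{equivalent} to the lemma you are proving: a connected circulant on $\Z_w$ with $w$ even necessarily contains a difference of even order (otherwise every generator lies in the unique maximal odd-order subgroup of $\Z_w$ and the graph is disconnected), so this is just the Stern--Lenz lemma restated. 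Both of your proposed resolutions --- invoking a general $1$-factorisation result for the bundled piece, and ``appealing directly to the cited result of Stern and Lenz'' --- are therefore circular. Your appeal to Hamilton decompositions of connected $4$-regular circulants is legitimate and handles a \emph{single} odd-order $s$ paired with $d$, but it does not extend: once $E_d$ has been consumed in $E_{s_1}\cup E_d$, the leftover even-cycle $2$-factor you obtain is no longer a circulant, so the same theorem cannot be reapplied to $E_{s_2}$.

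In short, the proposal correctly isolates the difficulty but does not resolve it; the substance of the Stern--Lenz argument is precisely an explicit construction that processes every odd-order difference using only the one fixed even-order difference $d$, and that construction is what your outline defers to the reference rather than supplies.
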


\begin{lemma}\label{Lemma:1factorisationCP} Let $w$ be a positive even integer and let $G$ be a circulant graph $\circ(w,S)$ where $S$ contains distinct elements $c$ and $d$ such that $\gcd(w,c)=1$ and $d$ has even order in $\mathbb{Z}_w$.
Then $G$ has a $1$-factorisation $\mathcal{F}$, and  there are two $1$-factors $F_1$ and $F_2$ in $\mathcal{F}$ such that $F_1\cup F_2$ is a Hamilton cycle in $G$.
\end{lemma}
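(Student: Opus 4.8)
The plan is to build on Lemma~\ref{Lemma:1factorisationSL} by first extracting a Hamilton cycle from $\circ(w,S)$ and then $1$-factorising the rest. Since $\gcd(w,c)=1$, the edges of order $w$ generated by the difference $c$ form a single cycle $0, c, 2c, \ldots, (w-1)c, 0$ (indices mod $w$), which is a Hamilton cycle $H$ in $\circ(w,S)$; call this edge set $F_1 \cup F_2$ where $F_1$ and $F_2$ are the two $1$-factors obtained by taking alternate edges around $H$ (this is a proper $2$-colouring of the edges of the even cycle $H$, which exists precisely because $w$ is even). So $F_1$ and $F_2$ are genuine $1$-factors of $\circ(w,S)$ whose union is the Hamilton cycle $H$.

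Next I would account for the leftover edges. Let $G' = \circ(w, S\setminus\{c\})$ be the graph on $\Z_w$ consisting of all edges of $\circ(w,S)$ except those generated by $c$; equivalently $G' = G - E(H)$. By hypothesis $d \in S\setminus\{c\}$ (since $c$ and $d$ are distinct), so $G'$ is a circulant graph on $\Z_w$ that still contains an edge of even order, namely any edge generated by $d$. By Lemma~\ref{Lemma:1factorisationSL}, $G'$ admits a $1$-factorisation $\mathcal{F}'$. Then $\mathcal{F} = \mathcal{F}' \cup \{F_1, F_2\}$ is a $1$-factorisation of $G$: every edge of $G$ lies in exactly one of these $1$-factors (edges of order $w$ from $c$ lie in $F_1 \cup F_2$, all others lie in $\mathcal{F}'$), each colour class is a perfect matching, and $F_1 \cup F_2 = E(H)$ is a Hamilton cycle in $G$ as required.

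One subtlety to check carefully is that $d$ still generates an edge of even order \emph{in $G'$ viewed as a circulant graph on the same vertex set $\Z_w$} — but the order of the difference $d$ depends only on $d$ and $w$, not on which other differences are present, so this is immediate; the only thing that matters is that $d$ remains in the generating set, which holds because we only removed $c \neq d$. A second point: if $c = d$ the statement would be vacuous in the sense that we could not separate the Hamilton-cycle edges from an even-order edge, but the hypothesis explicitly requires $c$ and $d$ distinct, so $G'$ is nonempty and the even-order edge survives. The main (and only real) obstacle is ensuring the removal of the $c$-edges does not destroy the hypothesis needed to invoke Lemma~\ref{Lemma:1factorisationSL} on the remainder, which the distinctness of $c$ and $d$ handles cleanly; everything else is bookkeeping.

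\begin{proof}[Proof sketch to be expanded]
Since $\gcd(w,c)=1$, the difference $c$ has order $w$ in $\Z_w$, so the edges $\{i, i+c\}$ for $i \in \Z_w$ form a Hamilton cycle $H$ of $\circ(w,S)$. As $w$ is even, $H$ is an even cycle, so its edges can be properly $2$-coloured; let $F_1$ and $F_2$ be the two colour classes, which are $1$-factors of $\circ(w,S)$ with $F_1 \cup F_2 = E(H)$. Now let $G' = \circ(w, S \setminus \{c\}) = G - E(H)$. Since $c \neq d$ we have $d \in S\setminus\{c\}$, so $G'$ is a circulant graph on $\Z_w$ containing an edge of even order (any edge generated by $d$). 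By Lemma~\ref{Lemma:1factorisationSL}, $G'$ has a $1$-factorisation $\mathcal{F}'$. Then $\mathcal{F} = \mathcal{F}' \cup \{F_1, F_2\}$ is a $1$-factorisation of $G$ in which $F_1 \cup F_2$ is a Hamilton cycle.
\end{proof}
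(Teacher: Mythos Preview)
Your proof is correct and follows essentially the same approach as the paper: extract the Hamilton cycle generated by the difference $c$, split it into two $1$-factors by alternating edges (the paper does this explicitly via parity of the index $i$, which amounts to the same $2$-colouring since $c$ must be odd), and then apply Lemma~\ref{Lemma:1factorisationSL} to $\circ(w,S\setminus\{c\})$ using the surviving even-order difference $d$. Your write-up is more detailed about why $F_1,F_2$ are genuine $1$-factors and why the hypothesis of Lemma~\ref{Lemma:1factorisationSL} persists after removing $c$, but the underlying argument is the same.
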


\begin{proof}
$\circ(w, S)$ contains edges $\{i,i+c\}$ for $i\in \Z_w$. Let $F_1=\left\{\left\{i,i+c\right\}: i\in\left\{0,2,\ldots,w-2\right\}\right\}$ and $F_2=\left\{\left\{i,i+c\right\}: i\in \left\{1,3,\ldots,w-1\right\}\right\}$. Then $\circ(w,S\setminus\{c\})$ has a $1$-factorisation by Lemma~\ref{Lemma:1factorisationSL}.
\end{proof}

A \emph{difference triple} is an ordered $3$-tuple $(a,b,c)$ that satisfies $a+b=c$. Two difference triples are disjoint if they have no elements in common.
For an even integer $w$ and a set $\mathcal{D}$ of pairwise disjoint difference triples on a subset of the differences $\{1,2,\ldots,\tfrac{w}{2}-1\}$, the partial STS($w$) induced by $\mathcal{D}$ is given by $(\Z_w,\mathcal{S})$, where $\mathcal{S}=\left\{\left\{i,b+i,c+i\right\}:i\in\Z_w, (a,b,c)\in \mathcal{D}\right\}$.

\subsection{The TTS($v$) construction}

Our embedding will rely on the following sets of difference triples. Constructions~\ref{Construction:sOddSL} and \ref{Construction:sEvenSL} are given by Stern and Lenz in their proof of the Doyen-Wilson Theorem \cite{SternLenz80}.

\begin{construction}\label{Construction:sOddSL} Let $w=12t+k$ where $k\in \{0,2,4\}$ and $t>0$. We construct the $2t-1$ difference triples
\[
\begin{array}{ll}
(1,3t-1,3t),& (2,5t-1,5t+1),\\
(3,3t-2,3t+1),& (4,5t-2,5t+2),\\
\vdots & \vdots\\
(2t-3,2t+1,4t-2),& (2t-2,4t+1,6t-1),\\
(2t-1,2t,4t-1).&
\end{array}
\]
Note that in these difference triples, every difference $d\in\{1,2,\ldots,6t\}\setminus\{4t,5t,6t\}$ occurs exactly once.
\end{construction}

\begin{construction}\label{Construction:sOddCP} Let $w=12t+k$ where $k\in \{0,2,4\}$ and $t>0$. We construct the $2t-1$ difference triples
\[
\begin{array}{ll}
(1,5t,5t+1),& (2,3t-1,3t+1),\\
(3,5t-1,5t+2),& (4,3t-2,3t+2),\\
\vdots & \vdots\\
(2t-3,4t+2,6t-1),& (2t-2,2t+1,4t-1),\\
(2t-1,4t+1,6t).&
\end{array}
\]
Note that in these difference triples, every difference $d\in\{1,2,\ldots,6t\}\setminus\{2t,3t,4t\}$ occurs exactly once.
\end{construction}

\begin{construction}\label{Construction:sEvenSL} Let $w=12t+k$ where $k\in \{6,8,10\}$ and $t>0$.  We construct the $2t$ difference triples
\[
\begin{array}{ll}
(1,5t+2,5t+3),& (2,3t,3t+2),\\
(3,5t+1,5t+4),& (4,3t-1,3t+3),\\
\vdots & \vdots\\
(2t-1,4t+3,6t+2),& (2t,2t+1,4t+1).\\
\end{array}
\]
Note that in these difference triples, every difference $d\in\{1,2,\ldots,6t+3\}\setminus\{3t+1,4t+2,6t+3\}$ occurs exactly once.
\end{construction}

\begin{construction}\label{Construction:sEvenCP}
Let $w=12t+k$ where $k\in \{6,8,10\}$ and $t>0$. We construct the $2t$ difference triples
\[
\begin{array}{ll}
(1,3t+1,3t+2),& (2,5t+2,5t+4),\\
(3,3t,3t+3),& (4,5t+1,5t+5),\\
\vdots & \vdots\\
(2t-1,2t+2,4t+1),& (2t,4t+3,6t+3).\\
\end{array}
\]
Note that in these difference triples, every difference $d\in\{1,2,\ldots,6t+3\}\setminus\{2t+1,4t+2,5t+3\}$ occurs exactly once.
\end{construction}

We now give the construction that will be used to prove Theorem~\ref{Theorem:ExtendingnonHamiltonianTTS}.

\begin{construction}\label{Construction:PartialTTS}
\rm
Suppose $u$ and $v$ are admissible integers such that $v>2u$ and $u>13$. Let $U=\{\infty_1,\ldots,\infty_u\}$ and $V=U\cup \mathbb{Z}_{w}$, where $w=v-u$. We construct a pair of partial STS($v$), $(V,\mathcal{R}_1)$ and $(V,\mathcal{R}_2)$ so that $\mathcal{R}_1\cup \mathcal{R}_2$ can be completed to the block set of a TTS($v$) by adding the blocks of a TTS($u$) with point set $U$. 
Throughout the following we will assume addition is modulo $w$.

Let $k$, $t$ and $s$ be integers such that $w=12t+k$ where $k\in\{0,2,\ldots,10\}$, $s=2t$ if $k\geq 6$ and $s=2t-1$ if $k\leq 4$. Let $m=s+1$.

If $w\equiv 2\mod{6}$ then let $u=7+6h$ where $h\in\{2,\ldots,s-2\}$, and if $w\equiv 4\mod{6}$ then $u=9+6h$ where $h\in\{1,\ldots,s-2\}$.
If $w\equiv 0\mod{6}$ then let $u=1+6h$ where $h\in\{3,4,\ldots,s-2\}$,
 or $u=3+6h$ where $h\in\{2,3,\ldots,s-2\}$.

\begin{itemize}
\item[{\rm Step 1.}] If $w\equiv 0\mod{6}$ then we add the following blocks to the partial STS($v$)s. 
\begin{itemize}
\item[$\bullet$] If $u\equiv 1\mod{6}$, then we include  in $\mathcal{R}_1$ the $4m$ blocks $\{i,m+i,2m+i\}$, $\{2m+i,3m+i,4m+i\}$, $\{4m+i,5m+i,i\}$ and $\{m+i,3m+i,5m+i\}$ where $i\in\{0,\ldots,s\}$. Similarly, we include in $\mathcal{R}_2$ the $4m$ blocks $\{i,m+i,5m+i\}$, $\{m+i,2m+i,3m+i\}$, $\{3m+i,4m+i,5m+i\}$ and $\{i,2m+i,4m+i\}$ where $i\in\{0,\ldots,s\}$. Note that these blocks only use pairs of difference $m$ and $2m$.

\item[$\bullet$] If $u\equiv 3\mod{6}$, then for $i\in\{0,\ldots,m-1\}$, add a copy of the blocks $\{i,2m+i,4m+i\}$ and $\{m+i,3m+i,5m+1\}$ to both $\mathcal{R}_1$ and $\mathcal{R}_2$. Note that these blocks only use pairs of difference $2m$. We also note that in Step 5 these blocks will be traded away from $\mathcal{R}_1$ to ensure that the triple system has no repeated blocks.
\end{itemize}

\item[{\rm Step 2.}] We define two sets of difference triples $\mathcal{D}_1$ and $\mathcal{D}_2$, and add the blocks of the partial STS($v$) induced by these difference triples to $\mathcal{R}_1$ and $\mathcal{R}_2$ respectively. 

Note that, when relevant, we will not use the differences required in Step 1. 
For $i\in\{1,2\}$, we define $S_i$ to be the set of differences in $\{1,2,\ldots,\tfrac{w}{2}\}$ that are not in a difference triple of $\mathcal{D}_i$. Let $S'=S_1\cap S_2$, let $S_1^*=S_1\setminus S_2$ and $S_2^*=S_2\setminus S_1$. 

The sets $\mathcal{D}_1$ and $\mathcal{D}_2$ are defined as follows.

\begin{itemize}
\item[$\bullet$] If $w-u\in\{1,3,5\}$ then let $\mathcal{D}_1=\mathcal{D}_2=\emptyset$. Clearly $\{1,\tfrac{w}{2}\}\subseteq S'$ and $S_1^*=S_2^*=\emptyset$.
\item[$\bullet$] If $w-u\in\{7,9,11\}$ and $20\leq w\leq 26$ then let $\mathcal{D}_1=\{(2,3,5)\}$ and $\mathcal{D}_2=\{(2,7,9)\}$. Note that $\{1,\tfrac{w}{2}\}\subseteq S'$, $S_1^*=\{7,9\}$ and $S_2^*=\{3,5\}$.
\item[$\bullet$] If $w-u\in\{7,9,11\}$ and $w\geq 28$ then let $\mathcal{D}_1=\{(2,2t+3,2t+5)\}$ and $\mathcal{D}_2=\{(2,2t+7,2t+9)\}$. Note that $\{1,\tfrac{w}{2}\}\subseteq S'$, $S_1^*=\{2t+7,2t+9\}$ and $S_2^*=\{2t+3,2t+5\}$.
\end{itemize}
Now suppose $w-u>11$, the $\mathcal{D}_1$ and $\mathcal{D}_2$ are defined as follows.

\begin{itemize}
\item[$\bullet$] If $k\in\{6,8,10\}$ then let $\mathcal{D}'_1$ and $\mathcal{D}'_2$ be the collections of difference triples given by Constructions~\ref{Construction:sEvenSL} and \ref{Construction:sEvenCP} respectively.

If $k=6$ let $\mathcal{D}_1$ and $\mathcal{D}_2$ be the sets obtained by removing difference triples containing the differences $s$ and $1,2,\ldots,h-1$ from $\mathcal{D}'_1$ and $\mathcal{D}'_2$ respectively. Then note that $\{1,\tfrac{w}{2},d\}\subseteq S'$, for some $d$ of even order in $\mathbb{Z}_w$. If $u\equiv 3\mod{6}$ then $d=m$, if $u\equiv 1\mod{6}$ and $t$ even then $d=5t+3$, and if $u\equiv 1\mod{6}$ and $t$ odd then $d=5t+2$ (recall that $h\geq 3$ when $u\equiv 1\mod{6}$). Also note that $2m-1\in S^*_1$ and $2m+1\in S_2^*$. 

If $k\in\{8,10\}$ let $\mathcal{D}_1$ and $\mathcal{D}_2$ be the sets obtained by removing difference triples containing the differences $1,2,\ldots,h$ from $\mathcal{D}'_1$ and $\mathcal{D}'_2$ respectively. Note that $\{1,\tfrac{w}{2}\}\subseteq S'$, $6t+3\in S_1^*$ and $2t+1\in S_2^*$. 

\item[$\bullet$]  If $k\in\{0,2,4\}$ then let $\mathcal{D}'_1$ and $\mathcal{D}'_2$ be the collection of difference triples given by Constructions~\ref{Construction:sOddSL} and \ref{Construction:sOddCP} respectively. 

If $k=0$ or $t$ is odd and $k=2$, we remove the difference triples containing the differences $s$ and $1,2,\ldots,h-1$, the resulting sets are $\mathcal{D}_1$ and $\mathcal{D}_2$. Note that  if $k=0$ then $\{1,\tfrac{w}{2},d\}\subseteq S'$, for some $d$ of even order in $\mathbb{Z}_w$ ($d=3t$ if $u\equiv 1\mod{6}$ and $d=m$ if $u\equiv 3\mod{6}$), and if $k=2$ then $\{2t-1,\tfrac{w}{2}\}\subseteq S'$. Also note that $2m-1\in S^*_1$ and $2m+1\in S_2^*$ in each case.

If $t$ is even and $k\in\{2,4\}$, then we obtain $\mathcal{D}_1$ and $\mathcal{D}_2$ by removing difference triples containing the difference $1$ and $s,s-1,\ldots,s-h+1$ from $\mathcal{D}'_1$ and $\mathcal{D}'_2$ . Note that $\{1,\tfrac{w}{2}\}\subseteq S'$, $3t-1\in S_1^*$ and $5t+1\in S_2^*$. 

Finally, if $k=4$ and $t$ is odd then we remove difference triples containing $1,2,\ldots,h$ to obtain $\mathcal{D}_1$ and $\mathcal{D}_2$. Note that $\{1,\tfrac{w}{2}\}\subseteq S'$, $6t\in S_1^*$ and $2t\in S_2^*$. 

\end{itemize}

\item[{\rm Step 3.}] We define the graphs $\mathcal{L}'=\circ(w,S')$, and for $i\in\{1,2\}$, let $\mathcal{L}_i^*=\circ(w,S^*_i)$. Note that, for $i\in\{1,2\}$, the edges in $\mathcal{L}_i=\circ(w,S'\cup S_i^*)$ correspond to pairs in $\mathbb{Z}_w$ that are not already in blocks of $\mathcal{R}_i$. 

We now define $1$-factorisations of the graphs $\mathcal{L}_1$ and $\mathcal{L}_2$. 
We will use Lemmas~\ref{Lemma:1factorisationSL} and \ref{Lemma:1factorisationCP}  to show that there exists a $1$-factorisation $\{F_1,F_2,F_3,F_4\}$ of $\mathcal{L}_1^*$, a $1$-factorisation $\{G_1,G_2,G_3,G_4\}$ of $\mathcal{L}_2^*$, and a $1$-factorisation $\{F_5,\ldots,F_u\}$ of $\L'$.

\begin{itemize}
\item[$\bullet$] When $w\equiv 0\mod{6}$ we have $\{1,\tfrac{w}{2},d\}\subseteq S'$ where $d$ has even order in $\mathbb{Z}_w$, and otherwise we have  $\{g,\tfrac{w}{2}\}\subseteq S'$, where $\gcd(g,w)=1$. Thus, by Lemma~\ref{Lemma:1factorisationCP} there exists a $1$-factorisation $\{F_5,\ldots,F_u\}$ of $\mathcal{L}'$ so that $F_u\cup F_{u-1}$ is a Hamilton cycle on $\mathbb{Z}_w$. Furthermore if  $w\equiv 0\mod{6}$ then we specify the following $1$-factors. 
If $u\equiv 1\mod{6}$, then $F_{u-2}=\left\{\left\{x,x+\tfrac{w}{2}\right\}:x\in\{0,1,\ldots,\tfrac{w}{2}-1\}\right\}$. If $w\equiv 6\mod{12}$ and $u\equiv 3\mod{6}$, then $F_{u-2}=\left\{\left\{x,x+m\right\}:x\in\{0,2,\ldots,w-2\}\right\}$ and $F_{u-3}=\left\{\left\{x,x+m\right\}:x\in\{1,3,\ldots,w-1\}\right\}$. Finally if $w\equiv 0\mod{12}$ and $u\equiv 3\mod{6}$, then $F_{u-2}=\left\{\left\{x,x+m\right\}:x\in X \right\}$ and $F_{u-3}=\left\{\left\{x,x-m\right\}:x\in X\right\}$, where $X=\left\{0,1,\ldots,m-1\right\}\cup \left\{2m,\ldots,3m-1\right\}\cup \left\{4m,\ldots,5m-1\right\}$.
Note that if $w\equiv 0\mod{6}$ then we remove these specified $1$-factors from the graph before applying Lemma~\ref{Lemma:1factorisationCP}.

\item[$\bullet$] If  $w-u\in\{7,9,11\}$, then for $S_i^*=\{c_i,d_i\}$ we define the following $1$-factorisations.
A $1$-factorisation of $\L_1^*$ is given by 
\begin{align*}
F_1&=\{\{i,i+c_1\}:i\in\{0,2,\ldots,w-2\}\},\\
F_2&=\{\{i,i+c_1\}:i\in\{1,3,\ldots,w-1\}\},\\
F_3&=\{\{i,i+d_1\}:i\in\{0,2,\ldots,w-2\}\}, \\
F_4&=\{\{i,i+d_1\}:i\in\{1,3,\ldots,w-1\}\}.
\end{align*}
A $1$-factorisation of $\L_2^*$ is given by 
\begin{align*}
G_1&=\{\{i,i+c_2\}:i\in\{0,2,\ldots,w-2\}\},\\
G_2&=\{\{i,i+c_2\}:i\in\{1,3,\ldots,w-1\}\},\\
G_3&=\{\{i,i+d_2\}:i\in\{1,3,\ldots,w-1\}\},\\
G_4&=\{\{i,i+d_2\}:i\in\{0,2,\ldots,w-2\}\}.
\end{align*}

\item[$\bullet$] If $w-u>11$, then note that for $i\in\{1,2\}$, $S_i^*$ has an element of even order in $\Z_w$ so by Lemma~\ref{Lemma:1factorisationSL} there exists a $1$-factorisation of $\L_i^*$.
\end{itemize}

\item[{\rm Step 4.}] We construct blocks of the form $F\vee \infty$, where $F\in \mathcal{F}$  and $\infty\in U$.

Add the blocks $F_i\vee\infty_i$ for $i\in \{1,2,\ldots,u\}$ to $\mathcal{R}_1$. Also add the following blocks to $\mathcal{R}_2$: $G_i\vee\infty_j$ where $(i,j)\in\{(1,u),(2,1),(3,2),(4,3)\}$ and $F_i\vee\infty_{i-1}$ for $i\in\{5,6,\ldots,u\}$.


\item[{\rm Step 5.}] If $w\equiv 0\mod{6}$ then we perform the following trades on blocks that were added to $\mathcal{R}_1$ in Steps 1--3. 

If $u\equiv 1\mod{6}$ then, for each $i\in \{0,\ldots,m-1\}$, we remove the following blocks from $\mathcal{R}_1$
\[
\begin{array}{ccc}
\{i,m+i,2m+i\},&\{2m+i,3m+i,4m+i\},&\{i,4m+i,5m+i\},\\ 
\{i,3m+i,\infty_{u-2}\},&\{m+i,4m+i,\infty_{u-2}\},&\{2m+i,5m+i,\infty_{u-2}\},
\end{array}\]
and replace them with the blocks
\[
\begin{array}{ccc}
\{i,3m+i,4m+i\},&\{i,2m+i,5m+i\},&\{m+i,2m+i,4m+i\},\\
\{i,m+i,\infty_{u-2}\},&\{2m+i,3m+i,\infty_{u-2}\}, &\{4m+i,5m+i,\infty_{u-2}\}. 
\end{array}\]

If $u\equiv 3\mod{6}$ then, for each $i\in\{0,1,\ldots,m-1\}$, we remove the following blocks from $\mathcal{R}_1$ where $(j,j')=(u-2,u-3)$ if  $m$ even or if $m$ odd and $i$ even, and $(j,j')=(u-3,u-2)$ if $m$ odd and $i$ odd 
\[\begin{array}{ccc}
\{i,2m+i,4m+i\},& \{m+i,3m+i,5m+i\}, &\{i,m+i,\infty_j\}\\ \{2m+i,3m+i,\infty_j\},& \{5m+i,i,\infty_{j'}\},& \{3m+i,4m+i,\infty_{j'}\} 
\end{array}\]

and replace them with the blocks
\[\begin{array}{ccc}
\{i,2m+i,\infty_j\},&\{m+i,3m+i,\infty_j\},& \{i,m+i,5m+i\}, \\
\{2m+i,3m+i,4m+i\}, &\{4m+i,i,\infty_{j'}\},& \{3m+i,5m+i,\infty_{j'}\}. \end{array}\]
\end{itemize}
This concludes Construction~\ref{Construction:PartialTTS}; $(V,\mathcal{R}_1\cup\mathcal{R}_2)$ is the required partial TTS($v$). We will show in Subsection~\ref{Subsection:connected} that its $2$-BIG is connected.
\end{construction}

\subsection{Connected bipartite $2$-BIG}\label{Subsection:connected}

We now need some additional results that we will use to prove that the $2$-BIG of the partial TTS($v$), $(V,\mathcal{R}_1\cup\mathcal{R}_2)$ given by Construction~\ref{Construction:PartialTTS} is connected. 

We first make the following observation regarding the difference triples in Constructions~\ref{Construction:sOddSL}--\ref{Construction:sEvenCP}. 
For a partial TTS($v$) generated by difference triples $D_1, D_2,\ldots, D_\sigma$, we define the \emph{orbit graph} $\mathcal{O}$ of the corresponding $2$-BIG as follows. Take $V(\mathcal{O})=\{D_1,D_2,\ldots,D_\sigma\}$ and let $D_i$ and $D_j$ be adjacent in $\mathcal{O}$ if they have at least one element in common. 	

We are interested in pairs of consecutive difference triples given by Constructions~\ref{Construction:sOddSL} and \ref{Construction:sOddCP} (or Constructions~\ref{Construction:sEvenSL} and \ref{Construction:sEvenCP}), by which we mean four triples of the form $(a,b,a+b),(a,c,a+c),(a+1,c-1,a+c)$ and $(a+1,b,a+b+1)$.

\begin{proposition}\label{Prop:ConnectedOrbitGraph}
Any two or more consecutive pairs of difference triples given by Constructions~\ref{Construction:sOddSL} and \ref{Construction:sOddCP} (or Constructions~\ref{Construction:sEvenSL} and \ref{Construction:sEvenCP}) induces a connected orbit graph.
\end{proposition}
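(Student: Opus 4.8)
The plan is to argue directly from the explicit forms of the difference triples in Constructions~\ref{Construction:sOddSL}--\ref{Construction:sEvenCP} and show that, within any stretch of two or more consecutive pairs, each triple shares a coordinate with one of its neighbours, so the orbit graph $\mathcal{O}$ is connected. Recall that a consecutive pair consists of four triples of the shape $(a,b,a+b)$, $(a,c,a+c)$, $(a+1,c-1,a+c)$, $(a+1,b,a+b+1)$, where in the SL/CP constructions the ``$a$-values'' run over $1,2,3,4,\ldots$ in order and the partner values $b,c$ are the large differences near $3t$ and $5t$ that decrease by one as $a$ increases. First I would record, for a single pair, the four internal adjacencies: the first and second triples share $a$; the second and third share the common sum $a+c$; the third and fourth share $a+1$. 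Hence every pair is internally connected, and the only thing that needs to be verified is that consecutive pairs are linked to one another.

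Next I would exhibit the link between pair $P$ (on rows $a=2\ell-1,\,2\ell$, say) and the following pair $P'$ (on rows $a=2\ell+1,\,2\ell+2$). The cleanest handle is the sum entry: in each construction the ``odd-indexed'' triples $(a,b,a+b)$ with $a$ odd have sums of the form $3t+$ (something) while the ``even-indexed'' triples $(a,c,a+c)$ with $a$ even have sums near $5t+$ (something), and these two interleaved arithmetic progressions of sums are each strictly monotone in $a$, so sums are never reused; therefore a cross-pair adjacency cannot come from a shared sum. Instead it comes from the small first coordinates: $P$ contributes triples whose first coordinates are $2\ell-1$ and $2\ell$, and $P'$ contributes triples whose first coordinates are $2\ell+1$ and $2\ell+2$, so again no direct overlap there either — which means the genuine link must be through one of the middle (``large'') coordinates. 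Here I would use the fact that the middle values $b$ and $c$ decrease by exactly $1$ from one pair to the next: e.g.\ the $b$-value attached to $a=2\ell-1$ equals (the $b$-value attached to $a=2\ell+1$) $+1$, while the third triple of a pair carries the value $c-1$. Matching these off-by-one shifts against the second triple $(a+1,c-1,a+c)$ of the neighbouring pair produces the required common element. I would do this bookkeeping once for Constructions~\ref{Construction:sOddSL}/\ref{Construction:sOddCP} and once for \ref{Construction:sEvenSL}/\ref{Construction:sEvenCP}; the four cases are nearly identical up to relabelling, so I would present one in full and indicate the substitutions for the others.

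Having established that consecutive pairs are joined and that each pair is internally connected, connectivity of $\mathcal{O}$ over any run of two or more pairs follows immediately: the vertex set decomposes into the (internally connected) pair-blocks $P_1,P_2,\ldots$, consecutive blocks $P_j,P_{j+1}$ are joined by an edge, so the union is connected as a ``path of connected blobs.'' One should also double-check the boundary behaviour — the first and last pairs in a construction are slightly special (the construction lists a final triple $(2t-1,2t,4t-1)$ in Construction~\ref{Construction:sOddSL}, and analogous terminal rows elsewhere) — and confirm that truncating to a sub-run, as happens in Construction~\ref{Construction:PartialTTS} when difference triples containing certain small or large differences are deleted, still leaves a contiguous block of consecutive pairs, which is exactly the hypothesis ``two or more consecutive pairs.''

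\textbf{Main obstacle.} The conceptual content is light; the real work is the case analysis, because there are four constructions and within each the roles of the $3t$-near value and the $5t$-near value swap between odd and even rows, and the terminal row has a different shape. The principal risk is an indexing slip in the off-by-one argument that produces the cross-pair edge — ensuring that the ``$c-1$'' appearing as a middle entry in the third triple of pair $P$ really does coincide with a $b$- or $c$-entry of pair $P'$ (or vice versa) requires writing the two parametrisations side by side and matching them carefully. I expect that once the correct alignment is pinned down for one construction, the remaining three follow by the symmetry between Constructions~\ref{Construction:sOddSL} and \ref{Construction:sOddCP} (which are mirror images under $b \leftrightarrow c$) and between the $k\le 4$ and $k\ge 6$ families.
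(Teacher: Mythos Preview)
Your plan is workable but unnecessarily laborious compared to the paper's two-line argument, and the extra work stems from a misreading of how the ``pairs'' are indexed. You correctly note that a single pair --- the four triples $(a,b,a+b)$, $(a,c,a+c)$, $(a+1,c-1,a+c)$, $(a+1,b,a+b+1)$ --- is internally connected (in fact it is a $4$-cycle in $\mathcal{O}$, since the first and fourth triples also share $b$). But you then treat the pairs as \emph{disjoint} blocks indexed by odd $a$ only, i.e.\ $P$ at rows $2\ell-1,2\ell$ and $P'$ at rows $2\ell+1,2\ell+2$, and go hunting for a cross-link through the large middle coordinates with an off-by-one match. That case analysis is the ``main obstacle'' you flag, and it is entirely avoidable.

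The paper's observation is simply that the four-triple pattern holds for \emph{every} $a\in\{1,\ldots,s-1\}$, not just for odd $a$: if you write down the triples at positions $a+1$ and $a+2$ from the two constructions, they again have the form $(a',b',a'+b')$, $(a',c',a'+c')$, $(a'+1,c'-1,a'+c')$, $(a'+1,b',a'+b'+1)$ with $a'=a+1$. Hence the pair at $a$ (positions $a,a+1$) and the pair at $a+1$ (positions $a+1,a+2$) literally share the two triples at position $a+1$, and connectivity of the full orbit graph follows at once from this chain of overlapping $4$-cycles --- no separate cross-pair edge needs to be located, and no construction-by-construction bookkeeping is required. The middle-coordinate coincidence you are trying to pin down (e.g.\ that $c-1$ in one block equals $c'$ in the next) is precisely one of the edges of the intermediate pair at $a+1$, so your route would eventually rediscover the same link, just the hard way.
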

\begin{proof}
Difference triples of the form $(a,b,a+b),(a,c,a+c),(a+1,c-1,a+c)$ and $(a+1,b,a+b+1)$ clearly have a connected orbit graph. As this holds for any $a\in\{1,\ldots,s-1\}$, the result is true of any two or more consecutive pairs of difference triples.
\end{proof}

\begin{lemma}\label{Lemma:1FactorMainComponent}
Suppose a graph $G$ has $1$-factorisation $\mathcal{F}=\{F_1,\ldots,F_n\}$ such that $F_i\cup F_j$ is a Hamilton cycle on $G$ for some $i,j\in\{2,\ldots,n\}$, and let $U=\{\infty_1,\ldots,\infty_n\}$ be a set where $V(G)\cap U=\emptyset$. 
Let $\mathcal{B}=\left\{F_i\vee\infty_i:i\in \left\{1,\ldots,n \right\}\right\}\cup \left\{F_{i+1}\vee\infty_{i}:i \in \left\{1,\ldots,n-1 \right\}\right\}$. 
Then the $2$-BIG of the partial TTS given by $(V(G)\cup U, \mathcal{B})$ is connected and bipartite.
\end{lemma}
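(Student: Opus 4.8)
The plan is to show that the $2$-BIG of the partial TTS $(V(G)\cup U,\mathcal{B})$ is connected by first establishing a "backbone" path through the blocks built from a designated Hamilton cycle, and then showing every other block attaches to this backbone. Bipartiteness is immediate: the blocks $F_i\vee\infty_i$ and $F_{i+1}\vee\infty_i$ are all triples, each consisting of an edge of $G$ together with a point of $U$, so no pair of points of $V(G)$ occurs twice within $\{F_i\vee\infty_i : i\}$ alone (the $F_i$ are a $1$-factorisation, hence edge-disjoint) and likewise within $\{F_{i+1}\vee\infty_i:i\}$; each pair $\{x,\infty_i\}$ occurs in at most one block of each family; so by Lemma~\ref{Lemma:BipartiteTTS} (taking $\mathcal{S}_1=\{F_i\vee\infty_i\}$ and $\mathcal{S}_2=\{F_{i+1}\vee\infty_i\}$, after checking these are genuinely partial STSs) the $2$-BIG is bipartite.

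For connectedness, first I would analyse when two blocks of $\mathcal{B}$ are adjacent in the $2$-BIG. Two blocks $F_a\vee\infty_p$ and $F_b\vee\infty_q$ meet in exactly two points precisely when either (i) $p=q$ and the corresponding edges of $F_a$ and $F_b$ share a vertex of $G$, or (ii) $\{x,y\}\in F_a$ equals $\{x,y\}\in F_b$ as an edge — impossible when $a=b$, and when $a\neq b$ this forces $p\neq q$ and the two blocks are $\{x,y,\infty_p\}$ and $\{x,y,\infty_q\}$. Case (ii) is the key linking mechanism: since $\mathcal{B}$ contains both $F_{i+1}\vee\infty_i$ and $F_{i+1}\vee\infty_{i+1}$ for each $i\in\{1,\dots,n-1\}$ (the latter being a block of the first family), every edge $e\in F_{i+1}$ yields adjacent blocks $\{e,\infty_i\}$ and $\{e,\infty_{i+1}\}$. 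So blocks sharing the same underlying $1$-factor $F_{i+1}$ but sitting over $\infty_i$ versus $\infty_{i+1}$ are always adjacent.

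Next I would exploit the Hamilton cycle hypothesis. Take $i,j$ with $F_i\cup F_j$ a Hamilton cycle $H$ of $G$. Within the set of blocks lying over a single point $\infty_p$ that use edges of $F_i$ and $F_j$, case (i) adjacencies trace out exactly the cycle $H$ (consecutive edges of $H$ share a vertex), so all blocks $\{e,\infty_p\}$ with $e\in F_i\cup F_j$ lie in one component $C_p$ of the $2$-BIG, for each fixed $p$. Using the case (ii) links above along the "ladder" $F_2, F_3,\dots,F_n$ — more precisely, chasing which $\infty_p$'s are reachable — I would argue all these $C_p$ merge into a single component $C$ containing every block whose $U$-coordinate is any $\infty_p$ and whose $1$-factor is $F_i$ or $F_j$; the precise bookkeeping is that $F_i\vee\infty_i$ and $F_i\vee\infty_{i-1}$ coexist in $\mathcal{B}$ when $2\le i\le n$, pairing $\infty_{i}$ with $\infty_{i-1}$, and similarly for $F_j$, so the $\infty$'s get chained together. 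Finally, every remaining block $F_a\vee\infty_p$ (with $F_a\notin\{F_i,F_j\}$) attaches to $C$: the $1$-factor $F_a$ shares a vertex of $G$ with some edge of $H$ at every vertex (since $H$ spans $V(G)$), giving a case-(i) edge from $F_a\vee\infty_p$ into the part of $C$ over $\infty_p$ — provided a block over $\infty_p$ using $F_a$ exists, which is exactly why the two families are interleaved as they are.

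The main obstacle I expect is the bookkeeping in the middle step: carefully verifying that the $U$-labels $\infty_1,\dots,\infty_n$ really do all end up in one component, using only the adjacencies genuinely present in $\mathcal{B}$ (namely $F_i\vee\infty_i$ paired with $F_{i+1}\vee\infty_i$ for consecutive indices), rather than an idealized complete ladder. One must be a little careful that $F_i$ and $F_j$ with $i,j\ge 2$ both appear over two consecutive $\infty$-labels, and handle the boundary index $n$ and the possibility $i=n$ or $j=n$ separately. Once the labels are linked and the Hamilton cycle is used to absorb the stray $1$-factors, connectedness follows.
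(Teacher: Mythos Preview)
Your approach is essentially the paper's: bipartiteness via Lemma~\ref{Lemma:BipartiteTTS}, then a Hamilton-cycle backbone together with chaining through the $\infty$-labels using the two adjacency types you identify. The paper sidesteps the bookkeeping you flag by first relabelling so that the Hamilton pair is $F_{n-1},F_n$; then both $F_{n-1}\vee\infty_{n-1}$ and $F_n\vee\infty_{n-1}$ lie in $\mathcal{B}$ over the \emph{same} point $\infty_{n-1}$, giving an honest $w$-cycle in the $2$-BIG on these blocks, after which the single chain $F_i\vee\infty_i \leftrightarrow F_i\vee\infty_{i-1} \leftrightarrow F_{i-1}\vee\infty_{i-1} \leftrightarrow \cdots$ (plus one step up to $F_n\vee\infty_n$) reaches every block. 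Note that your intermediate assertion that the blocks $\{e,\infty_p\}$ with $e\in F_i\cup F_j$ form a cycle $C_p$ ``for each fixed $p$'' is not literally true: over $\infty_p$ only the factors $F_p$ and $F_{p+1}$ appear, so this cycle exists only for $p$ with $\{p,p+1\}=\{i,j\}$, and for no $p$ at all when $i,j$ are non-consecutive. The relabelling to consecutive indices is exactly what makes your outline go through without further cases.
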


\begin{proof}
The $2$-BIG of $(V(G)\cup U, \mathcal{B})$ is bipartite by Lemma~\ref{Lemma:BipartiteTTS}, so it remains to show that the $2$-BIG is connected.
 Let $w=|V(G)|$, and without loss of generality we assume $F_n\cup F_{n-1}$ is a Hamilton cycle on $G$. 
Then the blocks given by $F_{n-1}\vee\infty_{n-1}$ and $F_{n}\vee\infty_{n-1}$ form a $w$-cycle in the $2$-BIG. Moreover, each block in $F_i\vee\infty_i$ has an edge to at least one block in each of $F_i\vee\infty_{i-1}$ and $F_{i+1}\vee\infty_i$.
\end{proof}

\begin{lemma}\label{Lemma:PathWithinOrbit}
Let $w$ be an even integer. Let $\mathcal{D}$ be a set of difference triples that generates the simple partial TTS($w$) ($\Z_w,\mathcal{T}$). If there exist distinct $a,b,c\in \{1,2,\ldots,\frac{w}{2}-1\}$ such that $\{(a,b,a+b),(a,c,a+c),(a+1,c-1,a+c),(a+1,b,a+b+1)\}\subseteq\mathcal{D}$, then for all $x\in\Z_w$ there is a path from $x+\{0,b,a+b\}$ to $(x+1)+\{0,b,a+b\}$ in the $2$-BIG of $\mathcal{T}$.
\end{lemma}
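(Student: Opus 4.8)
The plan is to work explicitly with the four blocks generated by each of the four difference triples and show that the blocks $x+\{0,b,a+b\}$ and $(x+1)+\{0,b,a+b\}$ lie in a common connected piece of the $2$-BIG. Since addition is modulo $w$ and each difference triple $(p,q,p+q)$ generates the blocks $\{y, y+q, y+p+q\}$ for $y\in\Z_w$, it suffices (by translating by $x$) to treat the case $x=0$: I will exhibit a path from $\{0,b,a+b\}$ to $\{1,b+1,a+b+1\}$. The key point is that two blocks from difference triples that share a common difference $d$ and a common anchor point overlap in exactly two points, so they are adjacent in the $2$-BIG; this is precisely the local structure recorded by the orbit graph in Proposition~\ref{Prop:ConnectedOrbitGraph}, but now tracked at the level of individual blocks rather than whole orbits.

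**The key steps.** First I list the relevant blocks. From $(a,b,a+b)$ we get blocks $\{y,y+b,y+a+b\}$; from $(a,c,a+c)$ we get $\{y,y+c,y+a+c\}$; from $(a+1,c-1,a+c)$ we get $\{y,y+c-1,y+a+c\}$; from $(a+1,b,a+b+1)$ we get $\{y,y+b,y+a+b+1\}$. Starting from $B_0=\{0,b,a+b\}$ (the $y=0$ block of the first triple), I look for a block of the second triple sharing two points with it: the block $\{0,c,a+c\}$ shares only the point $0$, so instead I use the pair $\{0,a+b\}$, which is a ``difference-$a$'' pair — and the first and third difference triples both contain difference $a$ or $a+1$ together with the sum $a+b$ versus $a+c$. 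The clean route is: the pair of points $\{b, a+b\}$ has difference $a$; the block $\{b, a+b, a+c\}$? — no. I would instead chase through the overlaps systematically: $\{0,b,a+b\}$ and $\{0,c,a+c\}$ share $\{0\}$ only, so I route via the ``$a$-difference'' pair shared between consecutive triples, namely using that $\{b,a+b\}$ appears in $\{b, a+b, a+b+\text{(something)}\}$ only through the triple $(a+1,b,a+b+1)$'s block $\{y,y+b,y+a+b+1\}$ with $y=0$: that block is $\{0,b,a+b+1\}$, which shares $\{0,b\}$ with $B_0$ — adjacency. From $\{0,b,a+b+1\}$, the pair $\{b,a+b+1\}$ has difference $a+1$; the third triple $(a+1,c-1,a+c)$ gives blocks with difference $a+1$ pairs of the form $\{z, z+a+c, \ldots\}$... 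Rather than force a particular chain here, the honest plan is: enumerate the at most $4\times 3$ types of pairwise overlaps among these twelve block-families, observe (as in Proposition~\ref{Prop:ConnectedOrbitGraph}) that the orbit graph on the four triples is connected, and then promote this to a block-level statement by noting that along each orbit-graph edge one can choose the translate $y$ so that the shared difference $d$ is realized on the same pair of points, hence the actual blocks are adjacent; finally check that chaining these choices around the connected orbit graph moves the ``base point'' of the block from $0$ to $1$ by exactly the right amount (this is where the specific arithmetic $a+(b)=a+b$ versus $(a+1)+(b)=a+b+1$ and $(a)+(c)=a+c=(a+1)+(c-1)$ is used — the $+1$ shifts propagate to give the net translation by $1$).

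**Main obstacle.** The genuine work is bookkeeping: verifying that some explicit sequence of blocks, each obtained from one of the four difference triples with a specific value of $y$, is a valid path (consecutive blocks meet in exactly two points, and no intermediate block accidentally coincides with an endpoint in a degenerate way when $a,b,c$ are small or close together), and that the endpoints are exactly $\{0,b,a+b\}$ and $\{1,b+1,a+b+1\}$. The distinctness hypothesis $a,b,c\in\{1,\ldots,\tfrac{w}{2}-1\}$ and simplicity of the partial TTS are what rule out the degeneracies, so I would invoke those explicitly at the points where two putatively distinct points of a block could collide. I expect the cleanest writeup simply displays the path as an explicit short list of blocks (likely four to eight blocks long), then remarks that each consecutive pair visibly intersects in two points and that translating by $x$ gives the general statement.
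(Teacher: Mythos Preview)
Your setup is exactly right and matches the paper: reduce to $x=0$ by translation, then exhibit an explicit path of blocks from $\{0,b,a+b\}$ to $\{1,b+1,a+b+1\}$.  You also correctly find the first edge of that path, namely
\[
\{0,b,a+b\} \;\sim\; \{0,b,a+b+1\},
\]
using the triple $(a+1,b,a+b+1)$ with base point $y=0$.

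The gap is that you then abandon the computation.  The orbit-graph argument you sketch is not a proof: knowing that the four orbits are mutually reachable in the 2-BIG does \emph{not} by itself tell you that walking around them produces a net shift of exactly $+1$ rather than, say, $0$ or $+2$.  That ``net translation by $1$'' claim is the entire content of the lemma, and it has to be checked by hand on a specific sequence of blocks; there is no shortcut via Proposition~\ref{Prop:ConnectedOrbitGraph}.

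The paper does precisely what you predicted in your last sentence: it simply writes the path down.  Continuing from your first edge, the next block must share $\{b,a+b+1\}$ (a pair of difference $a+1$) with $\{0,b,a+b+1\}$.  The triple $(a+1,c-1,a+c)$ with $y=b-c+1$ gives $\{b-c+1,\,b,\,a+b+1\}$, which works.  Then the pair $\{b-c+1,a+b+1\}$ has difference $a+c$, so the triple $(a,c,a+c)$ with the same $y=b-c+1$ gives $\{b-c+1,\,b+1,\,a+b+1\}$.  Finally $\{b+1,a+b+1\}$ has difference $a$, and the triple $(a,b,a+b)$ with $y=1$ gives the target $\{1,b+1,a+b+1\}$.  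So the whole path is
\[
\{0,b,a+b\},\ \{0,b,a+b+1\},\ \{b-c+1,b,a+b+1\},\ \{b-c+1,b+1,a+b+1\},\ \{1,b+1,a+b+1\},
\]
five vertices long.  That is the complete proof; no degeneracy analysis is needed beyond the standing hypothesis that the partial TTS is simple.
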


\begin{proof}
Taking $x=0$, the $2$-BIG of $\mathcal{T}$ contains the path
\[  [\{0,b,a+b\},\{0,b,a+b+1\},\{b-c+1,b,a+b+1\},\{b-c+1,b+1,a+b+1\},\{1,b+1,a+b+1\}].\]
Likewise, because blocks of $\mathcal{T}$ are cyclically generated from difference triples in $\mathcal{D}$, then for any $x\in\Z_w$ it is true that the $2$-BIG contains a path from $x+\{0,b,a+b\}$ to $x+\{1,b+1,a+b+1\}$.
\end{proof}

\begin{lemma}\label{Lemma:ConstructionConnected} 
Suppose $u$ and $v$ are admissible integers such that $v>2u$ and $u>13$.
The $2$-BIG corresponding to the partial TTS($v$), $(V,\mathcal{R}_1\cup\mathcal{R}_2)$ given by Construction~\ref{Construction:PartialTTS} is connected and bipartite. Furthermore, $(V,\mathcal{R}_1\cup\mathcal{R}_2)$ can be completed to a TTS($v$) by adding the blocks of a TTS($u$) with vertex set $U\subset V$, and there exist distinct points $\a,\b,\g\in V\setminus U$ such that each block in $\mathcal{R}_1\cup\mathcal{R}_2$ contains at most one of $\a$, $\b$ and $\g$.
\end{lemma}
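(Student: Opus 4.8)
The plan is to verify the three assertions of Lemma~\ref{Lemma:ConstructionConnected} in sequence, since the bulk of the work---connectivity---reduces, by the earlier lemmas, to tracking how the pieces of Construction~\ref{Construction:PartialTTS} overlap. Bipartiteness is immediate from Lemma~\ref{Lemma:BipartiteTTS}, since $\mathcal{R}_1$ and $\mathcal{R}_2$ are each (partial) STS($v$) by construction. For the completion claim, I would observe that the pairs left uncovered by $\mathcal{R}_1\cup\mathcal{R}_2$ are exactly the pairs inside $U$: Steps 1--3 use up every difference in $\{1,\dots,\tfrac{w}{2}\}$ (accounting for the specified $S'$, $S_1^*$, $S_2^*$ partition and the fact that $\mathcal{L}_i=\circ(w,S'\cup S_i^*)$ captures precisely the pairs of $\Z_w$ not yet in $\mathcal{R}_i$), Step~4 covers every pair $\{\infty,x\}$ with $\infty\in U$, $x\in\Z_w$ exactly twice (once via a block in $\mathcal{R}_1$ and once via one in $\mathcal{R}_2$, using the $1$-factorisations), and the Step~5 trades are pair-preserving. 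Hence each pair of $\Z_w$ lies in exactly two blocks, each cross pair lies in exactly two blocks, and no pair within $U$ is used; adding a TTS($u$) on $U$ completes $(V,\mathcal{R}_1\cup\mathcal{R}_2)$ to a TTS($v$). I should also check along the way that the partial TTS is simple (no repeated block), which is where the Step~5 trade for $u\equiv 3 \mod 6$ is needed.

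For connectivity, the strategy is to exhibit a single connected subgraph of the $2$-BIG containing a vertex from every block of $\mathcal{R}_1\cup\mathcal{R}_2$. First, the ``$F\vee\infty$'' blocks of Step~4 form a connected bipartite subgraph by Lemma~\ref{Lemma:1FactorMainComponent}: the $1$-factorisation $\{F_5,\dots,F_u\}$ of $\L'$ has $F_u\cup F_{u-1}$ a Hamilton cycle (Lemma~\ref{Lemma:1factorisationCP}), the blocks $F_i\vee\infty_i$ and $F_{i+1}\vee\infty_i$ chain these together, and the $\L_1^*$- and $\L_2^*$-factors ($F_1,\dots,F_4$ and $G_1,\dots,G_4$) attach to this through the shared $\infty_1,\infty_2,\infty_3,\infty_u$. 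Next, each difference-triple block from Step~2 shares two points with some $F\vee\infty$ block or with a neighbouring difference-triple block: Proposition~\ref{Prop:ConnectedOrbitGraph} shows the orbit graph of the $\mathcal{D}_i$ is connected, and Lemma~\ref{Lemma:PathWithinOrbit} turns ``two consecutive pairs of difference triples'' into actual paths in the $2$-BIG linking the orbit of $x+\{0,b,a+b\}$ to that of $(x+1)+\{0,b,a+b\}$, so each whole orbit is connected and the orbits are interlinked; one then links this block-pile into the $F\vee\infty$ component via a single triple block sharing a pair with an appropriate $1$-factor edge. Finally, the Step~1 blocks (present only when $w\equiv 0\mod 6$) must be threaded in: the $u\equiv 1$ blocks use only differences $m,2m$ and, after the Step~5 trade, the $\infty_{u-2}$-blocks link them to the main component; the $u\equiv 3$ case is handled similarly through $\infty_{u-2},\infty_{u-3}$ and the specified factors $F_{u-2},F_{u-3}$.

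For the last assertion, I would take $\a,\b,\g\in\Z_w=V\setminus U$ to be three elements that, as differences, never co-occur in a block. The difference-triple blocks have the shape $\{i,b+i,c+i\}$, so a block contains two of $\a,\b,\g$ only if the difference of those two lies in some triple of $\mathcal{D}_1\cup\mathcal{D}_2$; the Step~1 blocks only involve differences $m$ and $2m$; and the $F\vee\infty$ blocks contain exactly one point of $\Z_w$-pair plus an $\infty$, so I need the single edge-difference of every relevant $1$-factor to avoid ``$\a-\b$'', ``$\a-\g$'', ``$\b-\g$''. Concretely I expect $\a,\b,\g$ can be chosen as $0$ and two small values whose pairwise differences are $\tfrac{w}{2}$-adjacent to nothing used---e.g.\ forcing one pairwise difference to be $\tfrac{w}{2}$ (which is in $S'$, an edge of a $1$-factor, hence used, so that won't do) suggests instead choosing differences from the ``missing'' sets noted in each case of Step~2. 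A clean uniform choice is to let $\{\a,\b,\g\}$ be a coset-like triple $\{0,p,q\}$ with $p,q$ chosen so that $p$, $q$, $q-p$ are each among the differences that Step~2's notes explicitly list as lying in $S'$ together with $1$ and $\tfrac{w}{2}$; since those are covered only by $1$-factors $F\vee\infty$ each of whose blocks meets $\Z_w$ in a single pair, a block containing two of $\a,\b,\g$ would need that pair's difference to be one of $p,q,q-p$, yet also to be a genuine $\mathcal{D}_i$-difference---contradiction. The main obstacle I anticipate is exactly this bookkeeping: Construction~\ref{Construction:PartialTTS} has many cases ($k\in\{0,2,\dots,10\}$, $u\bmod 6$, $t$ parity), and one must confirm in every case both that the attaching pairs exist for connectivity and that a valid triple $\a,\b,\g$ survives. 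I would organise this as a case table keyed to the ``Note that \ldots $\subseteq S'$'' remarks already embedded in Step~2, rather than re-deriving each difference set from scratch.
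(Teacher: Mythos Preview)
Your overall strategy matches the paper's: bipartiteness from Lemma~\ref{Lemma:BipartiteTTS}, connectivity via Lemma~\ref{Lemma:1FactorMainComponent} for the $F\vee\infty$ blocks together with Proposition~\ref{Prop:ConnectedOrbitGraph} and Lemma~\ref{Lemma:PathWithinOrbit} for the difference-triple orbits, and linking the two pieces through a shared pair. Your explicit pair-bookkeeping for the completion claim and your separate handling of the Step~1 and Step~5 blocks are in fact more thorough than the paper's own proof, which leaves these implicit; the paper also organises the connectivity argument into the cases $w-u\in\{1,3,5\}$, $w-u\in\{7,9,11\}$ (where explicit short paths through the $\infty_1,\ldots,\infty_4$ blocks are written down), and $w-u>11$ (where the orbit-graph lemmas kick in), so you may wish to adopt that case split rather than a single uniform argument.

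The one place you diverge substantially is the $\alpha,\beta,\gamma$ assertion, and there your proposed route is both longer than needed and, as written, does not close. You steer the pairwise differences $p,q,q-p$ into $S'$, but a pair with difference in $S'$ is still covered by an $F_i\vee\infty_i$ block, and that block contains two of $\alpha,\beta,\gamma$; your concluding clause (``would need that pair's difference \ldots to be a genuine $\mathcal{D}_i$-difference---contradiction'') only disposes of the Step~2 blocks and leaves the $F\vee\infty$ blocks untouched. The paper dispatches this in one sentence: because at least one difference triple is removed in Step~2, there is a triple of differences $(a,b,c)$ with $a+b=c$ that no difference-triple block uses, and the three points $0,b,c\in\mathbb{Z}_w$ then serve as $\alpha,\beta,\gamma$. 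That observation is all that is actually required for the downstream application in Theorem~\ref{Theorem:ExtendingnonHamiltonianTTS}, where what is used is precisely that the pairs among $\alpha,\beta,\gamma$ are covered only by $\infty$-blocks (so that the Pasch trade can be set up). Replacing your projected case table with this one-line remark aligns you with the paper and removes the gap.
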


\begin{proof}Let $\mathcal{T}=\mathcal{R}_1\cup\mathcal{R}_2$. Let $w=v-u$ and let $k$, $t$ and $s$ be integers such that $w=12t+k$ where $k\in\{0,2,\ldots,10\}$, $s=2t$ if $k\geq 6$ and $s=2t-1$ if $k\leq 4$.

The partial TTS($v$) ($V,\mathcal{T}$) given by Construction~\ref{Construction:PartialTTS} is the union of two partial STS($v$) with block sets $\mathcal{R}_1$ and $\mathcal{R}_2$ so the $2$-BIG is bipartite. Since we remove at least one difference triple  in Step 2 of Construction~\ref{Construction:PartialTTS}, it follows that there are three points $\a,\b,\g\in V\setminus U$ such that each block in $\mathcal{R}_1\cup\mathcal{R}_2$ contains at most one of $\a$, $\b$ and $\g$. We now show that ($V,\mathcal{T}$) has a connected $2$-BIG.

We first observe that by Lemma~\ref{Lemma:1FactorMainComponent} the blocks formed from the $1$-factorisation of $\mathcal{L}'$ are all in the same component. 
If $w-u\in\{1,3,5\}$ then these are the only blocks in the partial TTS($v$), and it follows that the $2$-BIG is connected. Suppose $w-u\geq 7$.

{\bf Case 1.} $w-u\in\{7,9,11\}$. 
The blocks in $F_i\vee \infty_j$  for $4\leq j\leq i\leq u$ and $G_1\vee\infty_u$ are all in the same component of the $2$-BIG by Lemma~\ref{Lemma:1FactorMainComponent}. Furthermore each block $\{a,b,c\}\in\mathcal{R}_1\cup\mathcal{R}_2$ is adjacent to a block of the form $\{a,b,\infty_i\}$, and for each $x\in\{0,2,\ldots,w-2\}$ the $2$-BIG contains the following paths 
\[\begin{array}{l}
x+[\{0,c_1,\infty_1\},\{0,c_1,d_1\},\{0,d_1,\infty_3\}],\\
x+[\{1,1+c_1,\infty_2\},\{1,1+c_1,1+d_1\},\{1,1+d_1,\infty_4\}],\\
x+[\{0,c_2,\infty_u\},\{0,c_2,d_2\},\{0,d_2,\infty_3\}].
\end{array}\]

{\bf Case 2.} $w>u+11$. 
Since $\mathcal{D}_1$ and $\mathcal{D}_2$ are sets of consecutive difference triples given by Constructions~\ref{Construction:sOddSL}--\ref{Construction:sEvenCP}, it follows from Proposition~\ref{Prop:ConnectedOrbitGraph} that the corresponding orbit graph is connected. 

Finally, we show that there exists a difference triple $D\in \mathcal{D}_1\cup\mathcal{D}_2$  such that the  $2$-BIG contains a path from $x+B$ to $(x+1)+B$ for $x\in\{0,\ldots,w-1\}$, where $D$ generates blocks $B,1+B,\ldots,(w-1)+B$. Since $h\leq s-2$, at least two difference triples that satisfy the conditions of Lemma~\ref{Lemma:PathWithinOrbit} remain in each of Constructions~\ref{Construction:sOddSL}--\ref{Construction:sEvenCP}. 
In particular, for the case where $w\equiv 2,\, 4\mod{12}$ and $t$ is even let $a=2$, $b=5t-1$ and $c=3t-1$. When $t$ is odd and $w\equiv 4\mod{6}$ then let $a=2t-2$, $b=4t+1$ and $c=2t+1$. When $w\equiv 0\mod{12}$ or when $w\equiv 2\mod{6}$ and $t$ is odd then let $a=2t-3$, $b=2t+1$ and $c=4t+2$. If $w\equiv 6\mod{12}$ then $a=2t-2$, $b=2t+2$ and $c=4t+4$. When $w\equiv 8,\, 10\mod{12}$, let $a=2t-1$, $b=4t+3$ and $c=2t+2$.
Thus, by Lemma~\ref{Lemma:PathWithinOrbit} there is some orbit $B,1+B,\ldots,(w-1)+B$ such that the $2$-BIG contains a  path from $x+B$ to $(x+1)+B$ for $x\in\{0,\ldots,w-1\}$.
\end{proof}


Lemma~\ref{Lemma:ConstructionConnected} implies the following result for embedding TTS($u$). 

\begin{theorem}\label{Theorem:EmbeddingTTS}
If $u>13$ and there exists a TTS($u$), $(U,\mathcal{T})$ whose $2$-BIG is a bipartite connected graph, then for all admissible orders $v>2u$, there exists a TTS($v$),  $(V,\mathcal{T}')$ where $U\subset V$ and $\mathcal{T}\subset \mathcal{T}'$, and whose $2$-BIG is bipartite and contains exactly two components. Furthermore, there exist three points $a,b,c\in V\setminus U$ such that each block of $\mathcal{T}'$ contains at most one of $a$, $b$ and $c$.
\end{theorem}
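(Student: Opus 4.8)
The plan is to assemble Theorem~\ref{Theorem:EmbeddingTTS} directly from Construction~\ref{Construction:PartialTTS} and Lemma~\ref{Lemma:ConstructionConnected}. First I would verify that the hypotheses of Construction~\ref{Construction:PartialTTS} are met: given admissible $u>13$ and admissible $v>2u$, set $w=v-u$ and write $w=12t+k$ with $k\in\{0,2,\ldots,10\}$; I need to check that $u$ falls in the allowed range of the parameter $h$ dictated by the residue of $w$ modulo $6$ (e.g. $u=7+6h$ with $h\in\{2,\ldots,s-2\}$ when $w\equiv 2\bmod 6$, and the analogous cases). Since $v$ and $u$ are both admissible, $w=v-u$ is even, and a short case analysis on $u,v\bmod 6$ shows $w\not\equiv 0$ forces the appropriate form; the bound $v>2u$ gives $w>u$, which translates into $h\le s-2$, and $u>13$ gives the lower bound on $h$. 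This is routine but must be done carefully for each residue class.

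Next, having confirmed the construction applies, I would invoke Lemma~\ref{Lemma:ConstructionConnected} to get the partial TTS($v$) $(V,\mathcal{R}_1\cup\mathcal{R}_2)$ whose $2$-BIG is connected and bipartite, together with the three points $\alpha,\beta,\gamma\in V\setminus U$ each appearing in at most one block of $\mathcal{R}_1\cup\mathcal{R}_2$. Then I complete $(V,\mathcal{R}_1\cup\mathcal{R}_2)$ to a TTS($v$) by adjoining the blocks of the given TTS($u$) $(U,\mathcal{T})$ on point set $U\subset V$; Lemma~\ref{Lemma:ConstructionConnected} guarantees this completion exists (the partial STS pair leaves exactly a TTS($u$)'s worth of pairs uncovered, all lying inside $U$). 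Call the result $(V,\mathcal{T}')$ with $\mathcal{T}\subset\mathcal{T}'$.

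It then remains to analyse the $2$-BIG of $(V,\mathcal{T}')$. Two blocks of $\mathcal{T}'$ are adjacent iff they share two points. Any block of $\mathcal{T}$ lies entirely in $U$, while every block of $\mathcal{R}_1\cup\mathcal{R}_2$ meets $U$ in at most one point (blocks are either wholly in $\mathbb{Z}_w$ or of the form $F\vee\infty_i$ with a single $\infty$); hence no block of $\mathcal{T}$ can share two points with any block of $\mathcal{R}_1\cup\mathcal{R}_2$. Therefore the $2$-BIG of $\mathcal{T}'$ is the disjoint union of the $2$-BIG of the TTS($u$) $(U,\mathcal{T})$ — connected by hypothesis — and the $2$-BIG of the partial TTS($v$) $(V,\mathcal{R}_1\cup\mathcal{R}_2)$ — connected by Lemma~\ref{Lemma:ConstructionConnected}. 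So the $2$-BIG of $\mathcal{T}'$ has exactly two components, and it is bipartite since both pieces are (being $2$-BIGs of decomposable (partial) TTSs, by Lemma~\ref{Lemma:BipartiteTTS}). The points $a=\alpha$, $b=\beta$, $c=\gamma$ inherit the stated property, since they lie in $V\setminus U$ and the newly added blocks of $\mathcal{T}$ use only points of $U$.

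I expect the main obstacle to be the bookkeeping in the first step: checking that for every admissible $u>13$ and admissible $v>2u$ the residue of $w=v-u$ modulo $6$ together with the value of $s$ really does leave room for $u$ in the prescribed parametrisation $u\in\{7+6h\},\{9+6h\},\{1+6h\},\{3+6h\}$ with $h$ in the stated interval — in particular that $h\ge 2$ or $h\ge 3$ as required (which is where $u>13$ enters) and $h\le s-2$ (which is where $v>2u$, equivalently $w>u$, enters). Everything after that is a direct chaining of the two cited lemmas plus the elementary observation that blocks confined to $U$ cannot be $2$-adjacent to blocks meeting $U$ in at most one point, which cleanly forces the two-component structure.
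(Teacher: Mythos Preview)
Your proposal is correct and follows exactly the route the paper intends; indeed the paper simply states that Lemma~\ref{Lemma:ConstructionConnected} implies Theorem~\ref{Theorem:EmbeddingTTS} without writing out the details, and your write-up is a faithful unpacking of that implication. Two small remarks: first, in your summary you wrote that $\alpha,\beta,\gamma$ ``each appear in at most one block'', but the property provided by the lemma (and required by the theorem) is the different statement that each block contains at most one of $\alpha,\beta,\gamma$; you use the correct version later, so this is just a slip. Second, the inequality $h\le s-2$ does not follow from $w>u$ alone but rather from $w-u\geq 13$; when $w-u\in\{1,3,5,7,9,11\}$ the construction does not use the difference-triple families indexed by $h$ at all, so the bound on $h$ is irrelevant in those cases --- this is precisely the ``routine but careful'' case split you flagged.
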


\begin{proof}[Proof of Theorem~\ref{Theorem:ExtendingnonHamiltonianTTS}]
Let $(U,\mathcal{T})$ be the TTS($u$), where $U=\{\infty_1,\ldots,\infty_u\}$ and let $\mathcal{G}$ be the $2$-BIG of $(U,\mathcal{T})$. Let $w=v-u$ and let $(V,\mathcal{T}')$ be the TTS($v$) obtained by applying Theorem~\ref{Theorem:EmbeddingTTS} to $(U,\mathcal{T})$ where $V=U\cup\mathbb{Z}_{w}$.

We then perform the following trade to obtain a TTS($v$) whose $2$-BIG is connected. It follows from Theorem~\ref{Theorem:EmbeddingTTS} that there exist elements $a,b$ and $c$ in $\Z_w$ and distinct $\infty_i,\infty_j$ and $\infty_k$ in $U$ so that $\{a,b,c\}$ is not a block in $\mathcal{T}'$, but that one of the STS($v$) does contain the following blocks ({\it i.e.} the TTS($v$) contains these blocks and they are all in the same part of the bipartite partition): \[\{a,b,\infty_i\},\{b,c,\infty_j\},\{c,a,\infty_k\},\{\infty_i,\infty_j,\infty_k\}.\]

We remove these blocks from the TTS($v$) and replace them with
\[\{\infty_i,\infty_j,b\},\{\infty_i,\infty_k,a\},\{\infty_j,\infty_k,c\},\{a,b,c\}.\]

Since we performed the trade on blocks in one of the STS($v$), the resulting $2$-BIG is still bipartite. Furthermore, it is connected since, for example $\{\infty_i,\infty_j,b\}$ is adjacent to blocks in each of the two original components.

Suppose toward a contradiction that the $2$-BIG of the resulting TTS($v$) contains a Hamilton cycle. Then by reversing the trade given above, $\mathcal{G}$ must contain a Hamilton cycle which is a contradiction.\end{proof}

\section{Open Problems}

We conclude by discussing some open problems that arise naturally from this paper. Lemma~\ref{Lemma:smallv} states that there is no TTS($v$) with a connected bipartite non-Hamiltonian $2$-BIG for all admissible $v\leq 13$. This was proved by exhaustive computer search, a task which becomes impractical for larger orders. We examined several instances of decomposable  TTS($15$) and while each example had a 2-BIG that was either Hamiltonian or disconnected, many cases remain unknown. This leads us to the following question. 

\begin{question}
What is the smallest admissible integer $v$ such that there exists a TTS($v$) with a connected bipartite non-Hamiltonian $2$-BIG? 
\end{question}

It is possible that the construction of the TTS(331) in Section~\ref{Section:TTS331} could be further optimised. For example, at each splicing step we introduce new points to guarantee that the resulting partial TTS is simple, however there may exist a relabelling of the configuration that uses fewer new points. Furthermore, the application of Lindner's construction (see Theorem~\ref{Theorem:Lindner}) does not necessarily provide a minimal embedding of the partial TTS(55) given by configuration $\mathbb{F}$. As mentioned when applying this construction, it could also be possible to embed $\mathbb{F}$ in a TTS($v$) for admissible $v>331$. However there appears to be no effective way to incorporate into this construction the requirement that the final 2-BIG is connected. These observations suggest that the upper bound in Theorem~\ref{Theorem:ExistenceLargev} can be improved from 663. 
Another open problem is as follows.

\begin{question}
What is the smallest  integer $N$ such that for all admissible $v\geq N$ there exists a TTS($v$) with a connected bipartite non-Hamiltonian $2$-BIG? 
\end{question}

For $v<12$ it is known whether a TTS($v$) has a Hamiltonian $2$-BIG. In particular, for $v\in\{3,6\}$ the $2$-BIG of a TTS($v$) is non-Hamiltonian and for $v\in\{4,7,9,10\}$ the $2$-BIG of a TTS($v$) is Hamiltonian. We also note that when $v=13$, if the $2$-BIG is bipartite and connected, this is a sufficient condition to guarantee that the $2$-BIG is Hamiltonian. For each order $v\geq 12$ such that $v\equiv 0$ or $1\mod{3}$, there exists a TTS($v$) whose 2-BIG is Hamiltonian and another whose 2-BIG is non-Hamiltonian. We therefore conclude with the following, somewhat ambitious, open problem.

\begin{question}
For $v\geq 12$ such that $v\equiv 0$ or $1\mod{3}$, find sufficient conditions for a TTS($v$) to have a Hamiltonian $2$-BIG.  
\end{question}

{\Large{ \bf Acknowledgements}}

The first author was supported by an AARMS postdoctoral fellowship.
The second author acknowledges research grant support from NSERC (grant number RGPIN-04456-2016), CFI and IRIF, as well as computational support from
Compute Canada and its consortia (especially ACENET, SHARCNET and
WestGrid) and The Centre for Health Informatics and Analytics of the Faculty
of Medicine at Memorial University of Newfoundland.

\end{document}